%
%
%
%
%

\RequirePackage{fix-cm}
\documentclass[smallextended]{svjour3}       
\smartqed  
\usepackage{graphicx}
%
%

\usepackage{hyperref}
\usepackage[latin9]{inputenc}
\usepackage{mathrsfs}
\usepackage{amsmath}
\usepackage{amssymb}
\usepackage{esint}
\usepackage{enumerate}
\usepackage{bbm}
\usepackage[usenames]{color}
\usepackage{algorithm,algorithmic}  
\usepackage{listings}

%

\newcommand{\N}{\mathbb{N}}
\newcommand{\R}{\mathbb{R}}

\newcommand{\LL}{\mathcal{L}}
\newcommand{\BB}{\mathcal{B}}
\newcommand{\FF}{\mathcal{F}}
\newcommand{\sS}{\mathbb{S}}
\newcommand{\1}{\ensuremath{\mathbbm{1}}}

\newcommand{\dx}[1][x]{\ensuremath{\,{\rm{d}} #1}}

\def\norm#1{\hspace{0.2ex} \|#1\| \hspace{0.2ex}} 
 
\newcommand{\kommentar}[1]{}

 
%

\begin{document}

\title{An introduction to finite element methods for inverse coefficient problems in elliptic PDEs}

\titlerunning{Introduction to FEM for inverse coefficient problems in elliptic PDEs}      

\author{Bastian Harrach}

\institute{B. Harrach \at 
              Institute for Mathematics, Goethe-University Frankfurt, Frankfurt am Main, Germany \\
              \email{harrach@math.uni-frankfurt.de} 
}

\date{}

\maketitle

\begin{abstract}
Several novel imaging and non-destructive testing technologies are based on reconstructing the spatially dependent coefficient in an elliptic partial differential equation
from measurements of its solution(s). In practical applications, the unknown coefficient is often assumed to be piecewise constant on a given pixel partition (corresponding to the 
desired resolution), and only finitely many measurement can be made. This leads to the problem of inverting a finite-dimensional non-linear forward operator $\FF:\ \mathcal D(\FF)\subseteq \R^n\to \R^m$, where evaluating $\FF$ requires one or several PDE solutions. 

Numerical inversion methods require the implementation of this forward operator and its Jacobian. We show how to efficiently implement both using a standard FEM package and prove convergence of the FEM approximations against their true-solution counterparts. We present simple example codes for Comsol with the Matlab Livelink package, and numerically demonstrate the challenges that arise from 
non-uniqueness, non-linearity and instability issues. We also discuss monotonicity and convexity properties of the forward operator that arise for symmetric measurement settings. 

This text assumes the reader to have a basic knowledge on Finite Element Methods, including the variational formulation of elliptic PDEs, 
the Lax-Milgram-theorem, and the C\'ea-Lemma. Section \ref{sect:true_solution} also assumes that the reader is familiar with the concept of Fr\'echet differentiability.
\keywords{Finite Element Methods \and Inverse Problems \and Finitely many measurements \and Piecewise-constant coefficient}
\end{abstract}

\section{Introduction}

Many practical reconstruction problems in the field of medical imaging and non-destructive testing lead to inverse coefficient problems in elliptic partial differential equations.
This text is meant to be an introductory tutorial for implementing such problems with Finite Element Methods (FEM).

We assume that the unknown coefficient is piecewise-constant on a given resolution, and that finitely many linear measurements of one of several solutions 
are taken, where different solutions are generated by different linear excitation in the underlying physics model. 
This leads to the finite-dimensional non-linear inverse problem of
determining
\[
\sigma\in \R^n \quad \text{ from } \quad \FF(\sigma)\in \R^m
\]
with $n\in \N$ unknowns and $m\in \N$ measurements. 

Iterative numerical solution methods for this inverse problem require evaluating $\FF$ and its derivatives at each iteration step,
which means solving the underlying elliptic PDE. In this work, we will demonstrate how FEM-based implementations for $\FF$ and its Jacobian can be obtained very efficiently from standard FEM-solvers for the considered elliptic PDE. Roughly speaking, the sensitivity of a measurement with respect to changing the coefficient in one pixel can be simply calculated
by multiplying FEM-solutions corresponding to the measurement and excitation patterns with so-called pixel stiffness matrices that are obtained from
summing up all element stiffness matrices of elements belonging to the pixel where the change occurs.
Hence, the FEM-based Jacobian can be obtained without any additional computational burden with just a few lines of extra code.
Alternatively, for an even simpler implementation, the pixel stiffness matrices can be easily obtained by subtracting
global stiffness matrices without requiring any knowledge about the triangulation details.

This text is meant as a simple-to-read explanation of this approach in a sufficiently general but naturally arising setting. 
More precisely, we restrict ourselves to coercive and symmetric variational formulations that linearly 
depend on the unknown coefficients, and to excitations and measurements that correspond to linear functionals. 
In this setting, we demonstrate how to obtain the Jacobian 
of the FEM-based forward map with the means of a standard FEM software package such as COMSOL.
We also discuss monotonicity and convexity properties arising
in symmetric measurement situations that are the basis for recent research on rigorously justified reconstruction methods.

The purpose of this text is of introductory nature, but we proceed in a mathematically rigorous fashion to allow this text to also
serve as a reference. We prove differentiability of the true-solution forward operator and its FEM-based approximation, 
and show convergence of the FEM-approximated quantities to their true-solution counterparts.

Section~\ref{sec:Examples} gives two examples to motivate our general setting: stationary diffusion and Elecrical Impedance Tomography. Section~\ref{sect:true_solution} introduces the forward operator
using the exact PDE solution and derives its properties. The FEM-approximation of the forward operator and its Jacobian is studied in section~\ref{sect:FEM_setting}. 
In section~\ref{sect:numerics} we show numerical examples and demonstrate some of the major challenges that arise in solving inverse coefficient problems.
The COMSOL/MATLAB source codes for all examples are given in appendix~\ref{sect:appendix}.

\section{Motivation and examples}\label{sec:Examples}

\subsection{Stationary diffusion}\label{subsect:diffusion}

We consider the stationary diffusion equation 
\begin{equation}\label{eq:diffusion}
-\nabla \cdot (\sigma \nabla u)=g \quad \text{ in $\Omega$}
\end{equation}
with homogeneous Dirichlet boundary condition $u|_{\partial \Omega}=0$ in a Lipschitz bounded domain $\Omega\subset \R^d$, $d\in \N$.
For $u\in H^1(\Omega)$, $\sigma\in L^\infty_+(\Omega)$ and $g\in L^2(\Omega)$ the equation is equivalent to finding $u\in H_0^1(\Omega)$ with
\begin{equation}\label{eq:diffusion_VarForm}
\int_\Omega \sigma \nabla u \cdot \nabla v\dx = \int_\Omega g v \dx \quad \text{ for all } v\in H_0^1(\Omega),
\end{equation}
and unique solvability follows from the Lax-Milgram theorem.

We are interested in the inverse coefficient problem of determining the diffusivity coefficient $\sigma$ in \eqref{eq:diffusion} from measurements of the solution 
for one or several source terms $g$, cf.\ \cite{hanke1997regularizing} for an application in groundwater filtration.
In practical applications with finitely many measurements, it is natural to only aim for a certain pixel-based resolution 
and therefore assume that $\sigma$ is piecewise constant with respect to a partition $\Omega=\bigcup_{i=1}^n \mathcal  P_i$, i.e.
\[
\sigma(x)=\sum_{i=1}^n \sigma_i \chi_{\mathcal P_i}(x) \quad \text{ for all } x\in \Omega,
\]
where the pixels $\mathcal P_i\subseteq \Omega$ are assumed to be measurable subsets. The left image in figure \ref{fig:diffusion_example} shows a simple example where
the unit square $\Omega=(0,1)^2$ is divided into $3\times 3$ pixels. In the following, with a slight abuse of notation, we write $\sigma=(\sigma_1,\ldots,\sigma_n)^T\in \R^n$ for the unknown diffusivity.

\begin{figure*}
\begin{tabular}{c c}
  \includegraphics[width=0.45\textwidth]{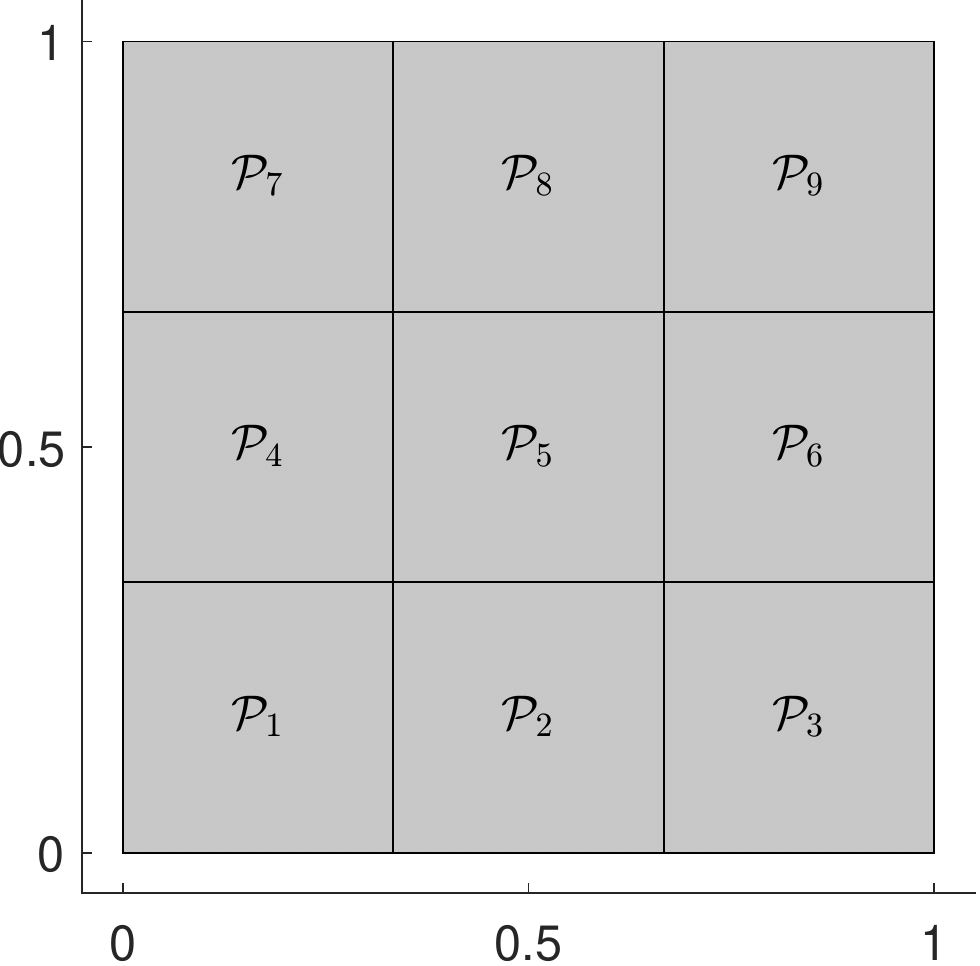} &
  \includegraphics[width=0.45\textwidth]{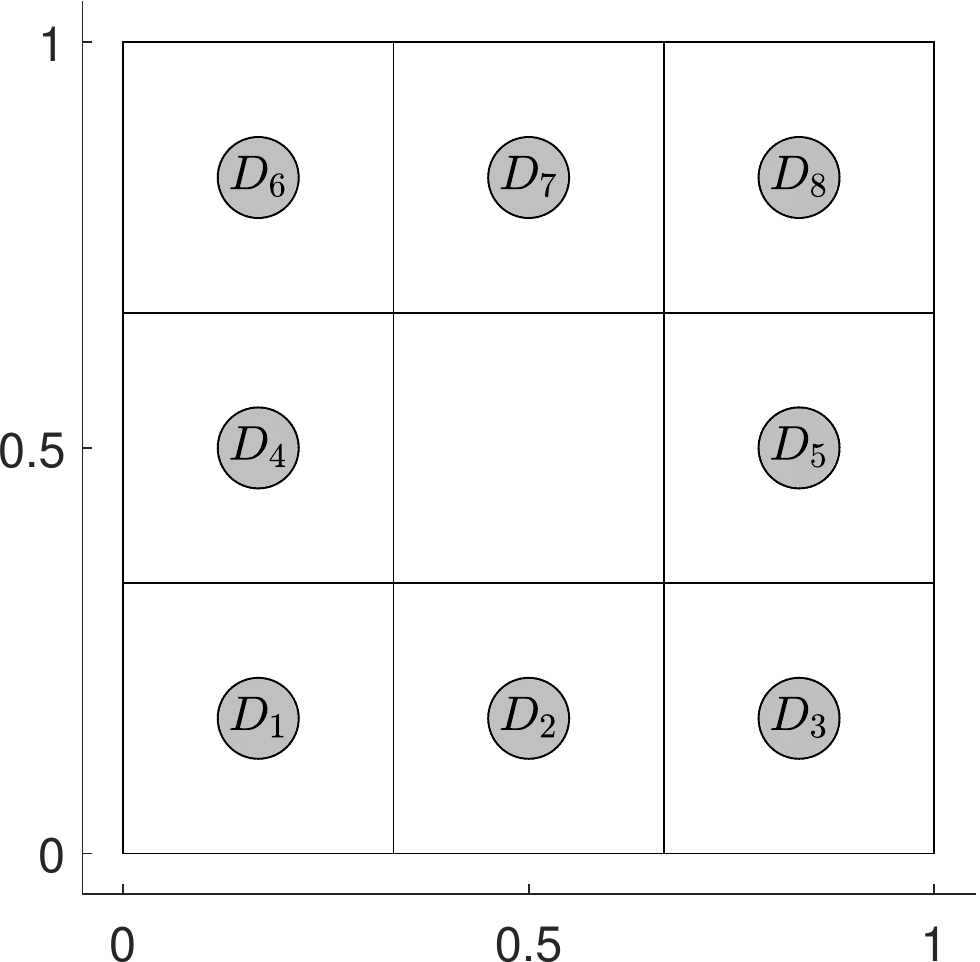} 
\end{tabular}
\caption{Pixel partition and circular subdomains used for excitations and measurements.}
\label{fig:diffusion_example}       
\end{figure*}

The source term $g$ in the diffusion model \eqref{eq:diffusion} can be identified with the linear functional
on the right hand side of the variational formulation \eqref{eq:diffusion_VarForm}
\[
l\in H^{-1}(\Omega),\quad l(v):= \int_\Omega g v \dx,
\]
which corresponds to identifying $L^2(\Omega)$ with a subset of $H^{-1}(\Omega)$.
Accordingly, we consider excitations in the form of linear functionals. Also, to emphasize that the solution depends on the diffusion coefficient and the excitation, we
write $u_\sigma^l$ in the following. 
The left image in figure \ref{fig:diffusion_example2} illustrates the concentration resulting from a constant source term $g=\chi_{D}$, i.e.\ $l(v)=\int_{D} v\dx$, 
where $D=D_2\cup D_4\cup D_5\cup D_7$ is a union of four circular subdomains as sketched in the right image of figure \ref{fig:diffusion_example}. 
The right image in figure \ref{fig:diffusion_example2} shows the corresponding plot for $D=D_1\cup D_3\cup D_6\cup D_8$. Both images show the solution of \eqref{eq:diffusion} with constant diffusion coefficient $\sigma=1$. 

\begin{figure*}
\begin{tabular}{c c}
  \includegraphics[width=0.45\textwidth]{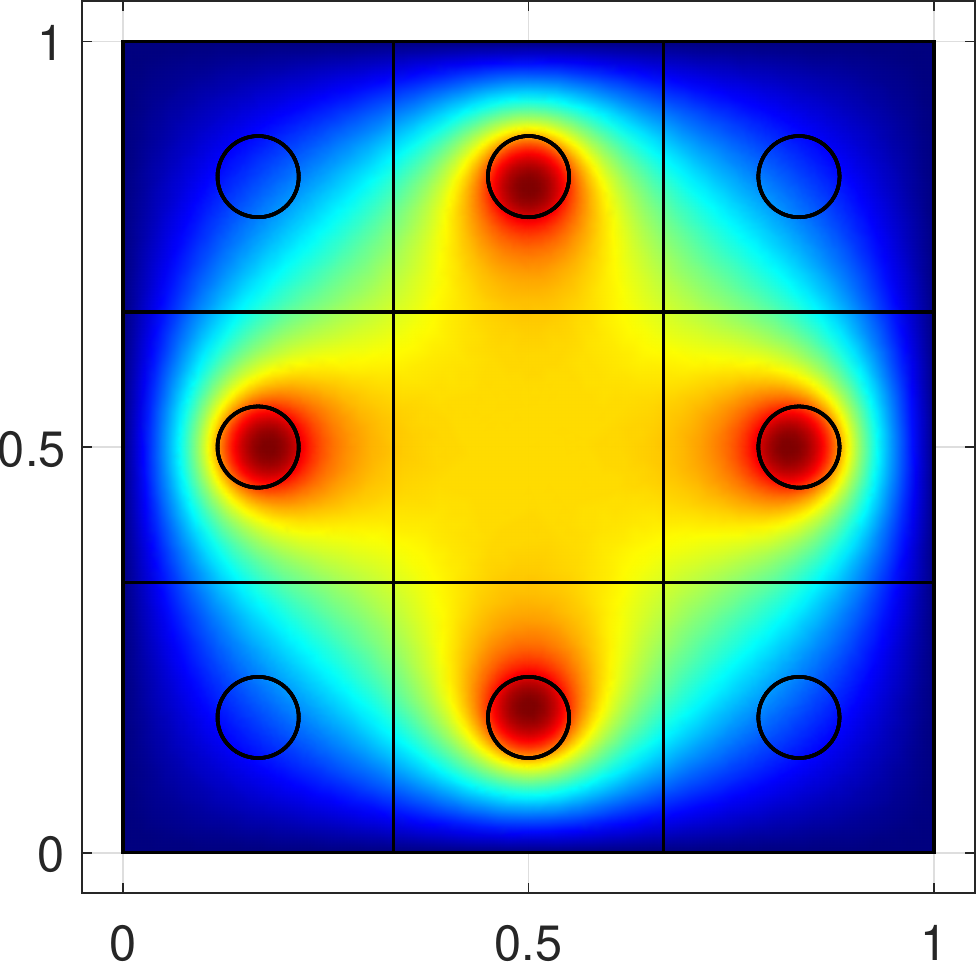}  &
  \includegraphics[width=0.45\textwidth]{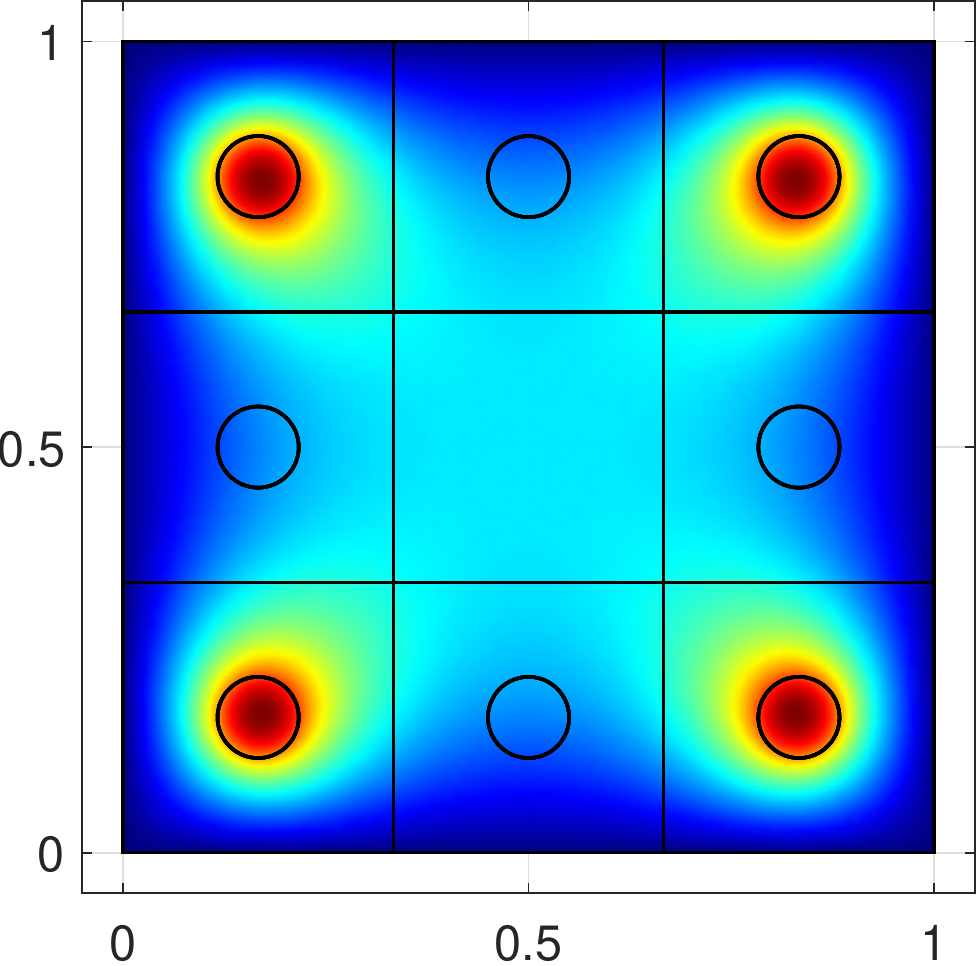}  
\end{tabular}
\caption{PDE solution for source terms on circular subdomains.}
\label{fig:diffusion_example2}       %
\end{figure*}

Natural models for measuring the solution of \eqref{eq:diffusion} also yield to linear functionals. Measuring the total concentration in one of the circular subdomains
$D_j$ corresponds to measuring $r(u):=\int_{D_j} u \dx$. Hence, the inverse problem of determining finitely many information about
the diffusivity coefficient from finitely many measurements of the concentration (possibly but not necessarily resulting from different excitations) leads to
the finite-dimensional inverse problem to
\begin{align*}
\text{determine} \quad \sigma\in \R^n_+ \quad \text{ from } \quad \FF(\sigma)\in \R^m,
\end{align*}
where
\[
\FF:\ \R^n_+\to \R^m, \quad \FF(\sigma):=(r_j(u_\sigma^{l_j}))_{j=1}^m,
\]
and $u_\sigma^{l_j}\in H_0^1(\Omega)$ solves
\[
\sum_{i=1}^n \sigma_i b_i(u_\sigma^{l_j},v)=l_j(v) \quad  \text{ for all } v\in H_0^1(\Omega),
\]
with given $l_j,r_j\in H^{-1}(\Omega)$, $j=1,\ldots,m$, and 
\[
b_i(u,v):=\int_{\mathcal P_i} \nabla u\cdot \nabla v \dx, \quad i=1,\ldots,n.
\]


\subsection{Electrical Impedance Tomography (EIT)} 

We give another example for an application that leads to an inverse elliptic coefficient problems in a similar
form as the diffusion example.

EIT aims to image the inner conductivity structure of a subject by current and voltage measurements 
through electrodes attached to the imaging subject. Let $\Omega\subseteq \R^d$, $d\in \{2,3\}$, be a smoothly bounded domain denoting the imaging subject.
The electrodes $\mathcal E_k$, $k=1,\ldots,K$, are assumed to be open connected subsets of $\partial \Omega$ with disjoint closures. 

When currents with strength $J=(J_1,\ldots,J_K)\in \R^K$ are driven through the $K$ electrodes (with $\sum_{k=1}^K J_k=0$), the resulting electric potential
$u\in H^1(\Omega)$ inside $\Omega$, and the potential $U\in \R^K$ on the electrodes, solve
\begin{alignat*}{2}
\nabla \cdot (\sigma \nabla u)&=0 \quad && \text{ in } \Omega,\\
\sigma \partial_\nu u&=0 \quad && \text{ on } \partial \Omega\setminus \bigcup_{k=1}^K \mathcal E_k,\\
u+z \sigma \partial_\nu &u=\text{const.}=:U_k \quad && \text{ on } \mathcal E_k,\ k=1,\ldots,K,\\
\int_{\mathcal E_k} \sigma \partial_\nu |_{\mathcal E_k} \dx[s]&=J_k && \text{ on } \mathcal E_k,\ k=1,\ldots,K,
\end{alignat*}
where $\sigma\in L^\infty_+(\Omega)$ is the conductivity inside $\Omega$, and $z>0$ is the contact impedance of the electrodes.

Under the gauge condition $U\in \R_\diamond^K:=\{ V\in \R^K:\ \sum_{k=1}^K V_k=0\}$, 
one can show (see \cite{somersalo1992existence}) that this so-called complete electrode model (CEM) for EIT is equivalent to the variational formulation
that $(u,U)\in H^1(\Omega)\times \R^K_\diamond$ solves
\begin{equation}
\int_\Omega \sigma \nabla u\cdot \nabla w \dx + \sum_{k=1}^K \int_{\mathcal E_k} \frac{1}{z}(u-U_k)(w-W_k)\dx[s]
=\sum_{k=1}^K J_k W_k
\end{equation}
for all $(w,W)\in H^1(\Omega)\times \R^K_\diamond$, and unique solvability follows from the Lax-Milgram theorem.

We assume that $z>0$ is known, and that $\sigma(x)=\sum_{i=1}^n \sigma_i \chi_{\mathcal P_i}(x)$ is piecewise constant
with respect to a pixel partition $\Omega=\bigcup_{i=1}^n \mathcal  P_i$, and write $\sigma=(\sigma_1,\ldots,\sigma_n)\in \R^n$
for the unknown conductivity values inside $\Omega$. 

The applied current patterns $J=(J_1,\ldots,J_K)\in \R^K$ can be identified with the functional 
\[
l\in H',\quad l(w,W):=\sum_{k=1}^K J_k W_k \quad \text{ for all } (w,W)\in H:=H^1(\Omega)\times \R^K_\diamond.
\]
Likewise, measuring the voltage between the $k_1$-th and the $k_2$-th electrode corresponds to measuring the linear functional 
\[
r\in H', \quad r(u,U):=U_{k_1}-U_{k_2},
\]
of the solution $(u,U)\in H$ generated by some current pattern.

Hence, the problem of determining the interior conductivity with a fixed finite resolution from finitely many voltage-current measurements in 
EIT (with CEM) leads to the finite-dimensional inverse problem
to 
\begin{align*}
\text{determine} \quad \sigma\in \R^n_+ \quad \text{ from } \quad \FF(\sigma)\in \R^m,
\end{align*}
where $\FF:\ \R^n_+\to \R^m, \quad \FF(\sigma):=(r_j(u_\sigma^{l_j},U_\sigma^{l_j}))_{j=1}^m$, and
$(u_\sigma^{l_j},U_\sigma^{l_j})\in H$ solves
\begin{align*}
b_0((u_\sigma^{l_j},U_\sigma^{l_j}),(w,W))+ \sum_{i=1}^n \sigma_i \, b_i((u_\sigma^{l_j},U_\sigma^{l_j}),(w,W))
 = l_j(w,W)
\end{align*}
for all $(w,W)\in H$, with given $l_j,r_j\in H'$, $j=1,\ldots,m$, and 
\begin{align*}
b_0((u,U),(w,W))&:=\sum_{k=1}^K \int_{\mathcal E_k} \frac{1}{z}(u-U_k)(w-W_k)\dx[s],\\
b_i((u,U),(w,W))&:=\int_{\mathcal P_i} \nabla u\cdot \nabla w \dx.
\end{align*}
Clearly, one could also extend this formulation to cover the case of unknown contact impedances.

\section{The true-solution setting}\label{sect:true_solution}

The examples in section \ref{sec:Examples} lead to inverse problems for a finite-dimensional
non-linear forward operator $\FF:\ \R^n_+\to \R^m$, where evaluations of $\FF$ require solving an infinite-dimensional 
linear problem (the PDE). In this section, we will first derive some properties of $\FF$
for the case that it is defined with the true infinite-dimensional PDE solution. The properties of the operator
$F\approx \FF$, that is defined with a FEM-approximation of the PDE solution, will be studied in section~\ref{sect:FEM_setting}.

\subsection{The true-solution forward operator and its derivative}

We will study problems that appear in the variational formulation of elliptic PDEs with piecewise constant coefficients on a fixed pixel partition, 
as in the examples in section~\ref{sec:Examples}.

\paragraph{The variational setting.}
Let $H$ be a Hilbert space. We consider the problem of finding $u\in H$ that solves
\begin{equation}\label{eq:varform}
b_\sigma(u,v)=l(v),
\end{equation}
where $b_\sigma:\ H\times H\to \R$ is a bilinear form, and $l\in H'=\LL(H,\R)$. $b_\sigma$ is assumed to linearly depend on $n$ parameters $\sigma=(\sigma_1,\ldots,\sigma_n)\in \R^n$ in the following way
\[
b_\sigma(u,v)=b_0(u,v)+\sum_{i=1}^n \sigma_i b_i(u,v),
\]
where $b_0,\ b_i:\ H\times H\to \R$ are bounded, symmetric and positive semidefinite bilinear forms. Writing $\1:=(1,\ldots,1)^T\in \R^n$, we also assume 
that $b_\1$ is bounded and coercive with constants $\beta,C>0$, i.e., 
\[
C\norm{v}^2\geq b_\1(v,v)=b_0(v,v)+\sum_{i=1}^n b_i(v,v)\geq \beta \norm{v}^2 \quad \forall v\in H.
\]
Clearly, this yields that for all $\sigma\in \R^n_+$
\begin{equation}\label{eq:b_sigma_coercive}
C\max\{1,\sigma_1,\ldots,\sigma_n\} \norm{v}^2 \geq  b_\sigma(v,v) \geq \beta \min\{1,\sigma_1,\ldots,\sigma_n\}\norm{v}^2 \quad \forall v\in H,
\end{equation}
so that $b_\sigma$ is symmetric, bounded and coercive. Here and in the following
$\R^n_+$ denotes the set of all $\sigma\in \R^n$ with $\sigma>0$ and "$>$" and "$\geq$" are understood elementwise on $\R^n$.

\paragraph{The true-solution forward operator.}
We now characterize the derivative of the solution of \eqref{eq:varform} with respect to $\sigma$.
\begin{lemma}\label{lemma:inf_dim_sol}
Let $l\in H'$. The solution operator
\[
\mathcal S:\ \R^n_+\to H, \quad \mathcal S(\sigma):=u_\sigma^l, \quad \text{ where $u_\sigma^l\in H$ solves \eqref{eq:varform},}
\]
is infinitely often Fr\'echet differentiable. Its first derivative 
\[
S':\ \R^n_+\to \LL(\R^n,H)
\]
fulfills that, for all $\sigma\in \R^n_+$ and $\tau\in \R^n$, $S'(\sigma)\tau\in H$ is the unique solution of
\[
b_\sigma( S'(\sigma)\tau, w)
= -\sum_{i=1}^n \tau_i b_i(u_\sigma^l , w ) \quad \forall w\in H.
\]
Also, for $r\in H'$, $\sigma\in \R^n_+$, and $\tau\in \R^n$,
\[
r(u_\sigma^l)=b_\sigma(u_\sigma^{l}, u_\sigma^{r}) \quad \text{ and } \quad 
r\left( \mathcal S'(\sigma)\tau \right)=-\sum_{i=1}^n \tau_i b_i(u_\sigma^{l}, u_\sigma^{r}).
\]
\end{lemma}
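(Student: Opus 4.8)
The plan is to recast the solution operator in purely operator-theoretic terms and then exploit that operator inversion is smooth. Define $A(\sigma)\in \LL(H,H')$ by $\langle A(\sigma)u,v\rangle = b_\sigma(u,v)$. Since $b_\sigma$ depends affinely on $\sigma$, we have $A(\sigma)=A_0+\sum_{i=1}^n \sigma_i A_i$ with $\langle A_i u,v\rangle=b_i(u,v)$, so $\sigma\mapsto A(\sigma)$ is an affine, hence $C^\infty$, map into $\LL(H,H')$. By the coercivity estimate \eqref{eq:b_sigma_coercive} and Lax-Milgram, $A(\sigma)$ is boundedly invertible for every $\sigma\in\R^n_+$, and $\mathcal{S}(\sigma)=A(\sigma)^{-1}l$.

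First I would prove infinite Fr\'echet differentiability. The set of boundedly invertible operators is open in $\LL(H,H')$ and the inversion map $T\mapsto T^{-1}$ is $C^\infty$ (indeed real-analytic), as follows from the resolvent identity
\[
T^{-1}-T_0^{-1}=-T_0^{-1}(T-T_0)T^{-1}
\]
together with the Neumann series. Composing with the smooth affine map $\sigma\mapsto A(\sigma)$ and the bounded linear evaluation $R\mapsto Rl$ then yields that $\mathcal{S}$ is $C^\infty$ on $\R^n_+$. The one point requiring care is that the coercivity constant $\beta\min\{1,\sigma_1,\ldots,\sigma_n\}$ degenerates as a component of $\sigma$ tends to $0$; this is harmless, because differentiability is a local property and on a neighborhood of any fixed $\sigma\in\R^n_+$ the norm $\norm{A(\sigma)^{-1}}$ stays uniformly bounded, which is exactly what the resolvent estimate needs.

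To obtain the formula for $\mathcal{S}'(\sigma)\tau$ I would differentiate the defining identity $A(\sigma)\mathcal{S}(\sigma)=l$. Since $A$ is affine, its derivative in direction $\tau$ is simply $\sum_{i=1}^n \tau_i A_i$, and the product rule gives $\bigl(\sum_{i=1}^n \tau_i A_i\bigr)\mathcal{S}(\sigma)+A(\sigma)\mathcal{S}'(\sigma)\tau=0$, i.e.\ $b_\sigma(\mathcal{S}'(\sigma)\tau,w)=-\sum_{i=1}^n \tau_i b_i(u_\sigma^l,w)$ for all $w\in H$, with uniqueness again from Lax-Milgram. Equivalently, one may bypass the abstract inversion result and argue directly from $u_{\sigma+\tau}^l-u_\sigma^l$, which satisfies $b_\sigma(u_{\sigma+\tau}^l-u_\sigma^l,w)=-\sum_{i=1}^n \tau_i b_i(u_{\sigma+\tau}^l,w)$; replacing $u_{\sigma+\tau}^l$ by $u_\sigma^l$ on the right-hand side defines the candidate derivative, and the coercivity bound controls the remainder.

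Finally, the two functional identities are consequences of the symmetry of $b_\sigma$ and the defining equations. Testing the equation for $u_\sigma^r$ with $v=u_\sigma^l$ gives $r(u_\sigma^l)=b_\sigma(u_\sigma^r,u_\sigma^l)=b_\sigma(u_\sigma^l,u_\sigma^r)$. For the second, I would test the equation for $u_\sigma^r$ with $v=\mathcal{S}'(\sigma)\tau$ to obtain $r(\mathcal{S}'(\sigma)\tau)=b_\sigma(u_\sigma^r,\mathcal{S}'(\sigma)\tau)=b_\sigma(\mathcal{S}'(\sigma)\tau,u_\sigma^r)$, and then insert $w=u_\sigma^r$ into the derivative formula just established, yielding $-\sum_{i=1}^n \tau_i b_i(u_\sigma^l,u_\sigma^r)$. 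The main obstacle in the whole argument is the first step, establishing infinite differentiability cleanly; the operator-inversion viewpoint reduces it to a standard fact, and everything afterwards is the product rule together with symmetry and the adjoint test-function substitution.
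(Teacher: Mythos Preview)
Your proposal is correct and follows essentially the same route as the paper: introduce the operator $A(\sigma)\in\LL(H,H')$ associated to $b_\sigma$, note that it depends affinely on $\sigma$, invoke smoothness of operator inversion together with the chain/product rule to obtain $\mathcal S'(\sigma)\tau$, and then read off the two functional identities from symmetry. The only cosmetic difference is that the paper derives $r(u_\sigma^l)$ and $r(\mathcal S'(\sigma)\tau)$ via operator manipulations with $A(\sigma)^{-1}$, whereas you obtain them by the equivalent test-function substitutions $v=u_\sigma^l$ and $w=u_\sigma^r$.
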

\begin{proof}
For $\sigma\in \R^n_+$, the Riesz theorem yields that there exists a unique operator $\BB(\sigma)\in \LL(H,H')$ associated to the bilinear form $b_\sigma(\cdot,\cdot)$, i.e.
\[
\langle \BB(\sigma) u,v \rangle_{H'\times H}=b_\sigma(u,v)\quad \text{ for all } u,v\in H.
\]
Clearly, $\BB(\sigma)$ is symmetric and, by the Lax-Milgram theorem, $\BB(\sigma)$ is invertible with symmetric inverse $\BB(\sigma)^{-1}\in \LL(H',H)$. Hence, 
\eqref{eq:varform} is uniquely solvable, and the solution operator $\mathcal S$ is well-defined.

It is easily checked, that $\BB(\sigma)$ is Fr\'echet differentiable for every $\sigma\in \R^n_+$, and that its derivative 
$\BB'(\sigma)\in \LL(\R^n,\LL(H,H'))$ is given by
\[
\BB'(\sigma)\tau=\sum_{i=1}^n \tau_i \BB_i \quad \text{ for all } \sigma\in \R^n_+,\ \tau\in \R^n,
\]
where $\BB_i\in \LL(H,H')$ is the unique operator fulfilling
\[
(\BB_i u,v)_{H'\times H}=b_i(u,v)\quad \text{ for all } u,v\in H.
\]
Since $\BB'(\sigma)$ does not depend on $\sigma$, this also shows that $\BB(\sigma)$ is infinitely often Fr\'echet differentiable with all second and higher derivatives being zero.

Using the derivative of operator inversion and the product and chain rule for the Fr\'echet derivative, we thus obtain that $\mathcal S(\sigma)$ is infinitely often Fr\'echet differentiable with 
\begin{align*}
\mathcal S'(\sigma)\tau&=-\BB(\sigma)^{-1} (\BB'(\sigma)(\tau)) \BB(\sigma)^{-1} l
=-\sum_{i=1}^n \tau_i \BB(\sigma)^{-1} \BB_i u_\sigma^l.
\end{align*}
Hence, $v=\mathcal S'(\sigma)\tau\in H$ solves
\begin{align*}
b_\sigma( v, w)
&=  -\sum_{i=1}^n \langle\tau_i \BB_i u_\sigma^l , w \rangle_{H'\times H}
= -\sum_{i=1}^n \tau_i b_i(u_\sigma^l , w ) \quad \forall w\in H.
\end{align*} 
Moreover, we obtain for all $r\in H'$, by using the symmetry of $\BB(\sigma)$,
\begin{align*}
r( u_\sigma^l )=\langle \BB(\sigma) \BB(\sigma)^{-1} r, u_\sigma^l \rangle_{H'\times H}
= b_\sigma( u_\sigma^l , u_\sigma^r), 
\end{align*}
and
\begin{align*}
r\left( \mathcal S'(\sigma)\tau \right)
&= \langle r, \mathcal S'(\sigma)\tau \rangle_{H'\times H}
= \langle \BB(\sigma) \mathcal S'(\sigma)\tau , \BB(\sigma)^{-1} r\rangle_{H'\times H}\\
&= -\sum_{i=1}^n \tau_i \langle \BB_i\, u_\sigma^l, u_\sigma^r,\rangle_{H'\times H}
= -\sum_{i=1}^n \tau_i b_i(u_\sigma^l, u_\sigma^r),
\end{align*}
which finished the proof.\hfill $\Box$
\end{proof}


\begin{corollary}\label{cor:inf_dim_single_meas}
Let $l,r\in H'$. Then the mapping
\[
\FF_{l,r}:\ \R^n_+\to \R,\quad \FF_{l,r}(\sigma):=r(u_\sigma^l)
\]
fulfills 
\[
\FF_{l,r}(\sigma)=b_\sigma(u_\sigma^l,u_\sigma^r) \quad \text{ for all $\sigma\in \R^n_+$.}
\]
Moreover, $\FF_{l,r}:\ \R^n_+\to \R$ is infinitely often differentiable and its first derivatives fulfill
\[
\frac{\partial}{\partial \sigma_i} \FF_{l,r}(\sigma)=-b_i(u_\sigma^l,u_\sigma^r).
\]
\end{corollary}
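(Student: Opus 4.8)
The plan is to obtain every assertion directly from Lemma~\ref{lemma:inf_dim_sol}, since $\FF_{l,r}$ is nothing but the composition $r\circ\mathcal S$ of the bounded linear functional $r\in H'$ with the solution operator $\mathcal S:\ \R^n_+\to H$. First I would recall the identity $r(u_\sigma^l)=b_\sigma(u_\sigma^l,u_\sigma^r)$ that was already established in the lemma; by definition $\FF_{l,r}(\sigma)=r(u_\sigma^l)$, so this immediately yields the claimed representation $\FF_{l,r}(\sigma)=b_\sigma(u_\sigma^l,u_\sigma^r)$ for all $\sigma\in\R^n_+$, with no further computation needed.

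For the smoothness, I would invoke that $\mathcal S$ is infinitely often Fr\'echet differentiable by Lemma~\ref{lemma:inf_dim_sol}, and that $r$, being a bounded linear map $H\to\R$, is its own Fr\'echet derivative and hence $C^\infty$. The chain rule for Fr\'echet derivatives then gives that $\FF_{l,r}=r\circ\mathcal S:\ \R^n_+\to\R$ is infinitely often differentiable.

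Finally, for the derivative formula I would apply the chain rule together with the second identity of the lemma. Differentiating $\FF_{l,r}=r\circ\mathcal S$ in a direction $\tau\in\R^n$ gives $\FF_{l,r}'(\sigma)\tau=r(\mathcal S'(\sigma)\tau)=-\sum_{i=1}^n\tau_i\,b_i(u_\sigma^l,u_\sigma^r)$, where the last equality is exactly the formula for $r(\mathcal S'(\sigma)\tau)$ from Lemma~\ref{lemma:inf_dim_sol}. Specializing to $\tau=e_i$, the $i$-th standard basis vector, isolates the partial derivative and produces $\frac{\partial}{\partial\sigma_i}\FF_{l,r}(\sigma)=-b_i(u_\sigma^l,u_\sigma^r)$, as claimed.

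I do not expect any genuine obstacle: the statement is a direct corollary of Lemma~\ref{lemma:inf_dim_sol}, and the only verification is the bookkeeping that composing with the fixed linear functional $r$ preserves smoothness and that the two explicit formulas of the lemma transcribe verbatim into the representation and the partial-derivative formula.
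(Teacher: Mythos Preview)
Your proposal is correct and matches the paper's own argument, which simply states that the corollary follows from Lemma~\ref{lemma:inf_dim_sol}. You have spelled out the bookkeeping (composition with the bounded linear functional $r$, chain rule, specialization to $\tau=e_i$) that the paper leaves implicit, but the route is identical.
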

\begin{proof}
This follows from Lemma~\ref{lemma:inf_dim_sol}.\hfill $\Box$
\end{proof}

\subsection{Convexity and monotonicity for symmetric measurements}\label{subsect:true_symmetric}

A special mathematical structure appears for measurements $\FF_{l,r}$, when $l$ and $r$ are taken from the same subset of $H'$, and all combinations are used. In the stationary diffusion example this corresponds to using the same subsets of $\Omega$ 
both for excitations and concentration measurements, in EIT this corresponds to using the same electrodes for voltage and current measurements. 

Given a set of $m\in \N$ excitations/measurements $\{l_1,\ldots,l_m\}\subset H'$, we combine the measurements into a matrix-valued map $\FF:\ \R^n_+\to \R^{m\times m}$
\[
\FF(\sigma)=(\FF_{j,k}(\sigma))_{j,k=1,\ldots,m}\in \R^{m\times m}, \quad \FF_{j,k}(\sigma)=\FF_{l_j,l_k}(\sigma)=l_k(u_\sigma^{l_j}).
\]

As before, we write ''$\geq$'' for the elementwise order on $\R^n$. We also write $\sS_m\subseteq \R^{m\times m}$ for the subset of symmetric $m\times m$-matrices,
and ''$\succeq$'' for the Loewner order on $\sS_m$, i.e. $B\succeq A$ denotes that $B-A$ is positive semi-definite.

\begin{lemma}\label{lemma:infdim_convexity}
$\FF:\ \R^n_+\to \R^{m\times m}$ has the following properties:
\begin{enumerate}[(a)]
\item $\FF$ is infinitely often differentiable. 
\item For all $\sigma\in \R^n_+$, $\FF(\sigma)\in \sS_m$ and $\FF(\sigma) \succeq 0$. $\FF(\sigma)$ is positive definite if $l_1,\ldots,l_m\in H'$ are linearly independent.
\item $\FF$ is monotonically non-increasing, i.e.
\begin{align}
\label{eq:FF_mon} \FF'(\sigma)\tau \preceq 0 \quad \text{ for all } \quad \sigma\in \R^n_+,\ 0\leq \tau\in \R^n,
\end{align}
and for all $\sigma^{(1)},\sigma^{(2)}\in \R^n_+$
\begin{align}
\label{eq:FF_mon2}  \sigma^{(1)}\leq \sigma^{(2)} \quad \text{ implies } \quad \FF(\sigma^{(1)})\succeq \FF(\sigma^{(2)}).
\end{align}
\item $\FF$ is convex, i.e., for all $\sigma,\sigma^{(0)}\in \R^n_+$,
\begin{align}
\label{eq:FF_conv} \FF(\sigma)-\FF(\sigma^{(0)}) \succeq \FF'(\sigma^{(0)})(\sigma-\sigma^{(0)}),
\end{align}
and, for all $t\in [0,1]$,
\begin{align}
\label{eq:FF_conv2} \FF((1-t) \sigma^{(0)}+t \sigma)\preceq  (1-t) \FF(\sigma^{(0)})+t \FF(\sigma).
\end{align}
\end{enumerate}
\end{lemma}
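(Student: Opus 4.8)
The plan is to reduce every matrix statement to a scalar statement about the single-measurement maps $\FF_{l,r}$ from Corollary~\ref{cor:inf_dim_single_meas}, exploiting that the solution operator $l\mapsto u_\sigma^l$ is linear. The central device is the identity
\[
x^T \FF(\sigma) x = \FF_{l_x,l_x}(\sigma) = b_\sigma(u_\sigma^{l_x},u_\sigma^{l_x}), \qquad l_x:=\sum_{j=1}^m x_j l_j,
\]
valid for every $x\in\R^m$, which holds because $u_\sigma^{l_x}=\sum_j x_j u_\sigma^{l_j}$ by linearity of \eqref{eq:varform} in $l$, so that $\sum_{j,k}x_jx_k l_k(u_\sigma^{l_j})=l_x(u_\sigma^{l_x})$. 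Part (a) is then immediate: each entry $\FF_{j,k}=\FF_{l_j,l_k}$ is infinitely often differentiable by Corollary~\ref{cor:inf_dim_single_meas}, and a matrix-valued map is smooth precisely when all its entries are.

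For part (b), symmetry of $\FF(\sigma)$ follows from symmetry of $b_\sigma$, since $\FF_{j,k}(\sigma)=b_\sigma(u_\sigma^{l_j},u_\sigma^{l_k})=b_\sigma(u_\sigma^{l_k},u_\sigma^{l_j})=\FF_{k,j}(\sigma)$. Positive semidefiniteness is read off the central identity: $x^T\FF(\sigma)x=b_\sigma(u_\sigma^{l_x},u_\sigma^{l_x})\geq 0$ by coercivity of $b_\sigma$. For definiteness under linear independence of the $l_j$, I note that $l\mapsto u_\sigma^l=\BB(\sigma)^{-1}l$ is a linear isomorphism by Lemma~\ref{lemma:inf_dim_sol}, so the $u_\sigma^{l_j}$ are linearly independent; then $x^T\FF(\sigma)x=0$ forces $b_\sigma(u_\sigma^{l_x},u_\sigma^{l_x})=0$, hence $u_\sigma^{l_x}=\sum_j x_j u_\sigma^{l_j}=0$ by coercivity, hence $x=0$.

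For part (c), differentiating the central identity (equivalently, using $\partial_i\FF_{l,r}=-b_i(u_\sigma^l,u_\sigma^r)$ from Corollary~\ref{cor:inf_dim_single_meas}) gives, for every $x\in\R^m$ and $\tau\in\R^n$,
\[
x^T\bigl(\FF'(\sigma)\tau\bigr)x=-\sum_{i=1}^n\tau_i\, b_i(u_\sigma^{l_x},u_\sigma^{l_x}).
\]
Since each $b_i$ is positive semidefinite, the right-hand side is $\leq 0$ whenever $\tau\geq 0$, which is exactly \eqref{eq:FF_mon}. The global inequality \eqref{eq:FF_mon2} then follows by integrating along the segment $\sigma(t)=(1-t)\sigma^{(1)}+t\sigma^{(2)}$, which stays in the convex set $\R^n_+$: one has $\FF(\sigma^{(2)})-\FF(\sigma^{(1)})=\int_0^1\FF'(\sigma(t))(\sigma^{(2)}-\sigma^{(1)})\dx[t]$, and each integrand is $\preceq 0$ by \eqref{eq:FF_mon} since $\sigma^{(2)}-\sigma^{(1)}\geq 0$.

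Part (d) is the crux, and I would obtain it from a variational principle rather than a Hessian computation. Since $b_\sigma$ is symmetric and coercive, $u_\sigma^{l_x}$ minimizes $v\mapsto\tfrac12 b_\sigma(v,v)-l_x(v)$ over $H$, so that
\[
x^T\FF(\sigma)x=\FF_{l_x,l_x}(\sigma)=\sup_{v\in H}\bigl(2\,l_x(v)-b_\sigma(v,v)\bigr).
\]
For each fixed $v$ the expression $2l_x(v)-b_\sigma(v,v)=2l_x(v)-b_0(v,v)-\sum_{i=1}^n\sigma_i b_i(v,v)$ is affine in $\sigma$, hence the pointwise supremum $\sigma\mapsto x^T\FF(\sigma)x$ is convex on $\R^n_+$ for every $x\in\R^m$. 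Applying the definition of convexity and, thanks to the differentiability from (a), its first-order characterization to each scalar map $\sigma\mapsto x^T\FF(\sigma)x$, and then quantifying over all $x$, yields precisely \eqref{eq:FF_conv2} and \eqref{eq:FF_conv}. The step I expect to be most delicate is isolating the reduction $x^T\FF(\sigma)x=\FF_{l_x,l_x}(\sigma)$ together with the affine dependence on $\sigma$; once these are identified, every assertion collapses to an elementary scalar fact. As a derivative-based alternative to the variational argument, \eqref{eq:FF_conv} also follows by checking that the Hessian of $\FF_{l_x,l_x}$ is positive semidefinite: differentiating $\partial_i\FF_{l_x,l_x}=-b_i(u_\sigma^{l_x},u_\sigma^{l_x})$ and inserting the sensitivity equation of Lemma~\ref{lemma:inf_dim_sol} gives $\sum_{i,j}\tau_i\tau_j\,\partial_i\partial_j\FF_{l_x,l_x}(\sigma)=2\,b_\sigma(\phi,\phi)\geq 0$, where $\phi\in H$ solves $b_\sigma(\phi,w)=-\sum_{i=1}^n\tau_i b_i(u_\sigma^{l_x},w)$ for all $w\in H$.
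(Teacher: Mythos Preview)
Your proof is correct. Parts (a), (b), and the derivation of \eqref{eq:FF_mon} match the paper essentially verbatim through the reduction $x^T\FF(\sigma)x=b_\sigma(u_\sigma^{l_x},u_\sigma^{l_x})$. The differences lie in the remaining two implications. For \eqref{eq:FF_mon2} you integrate $\FF'$ along the segment, which is direct and self-contained; the paper instead first establishes the convexity inequality \eqref{eq:FF_conv} and then combines it with \eqref{eq:FF_mon} (applied at $\sigma^{(0)}=\sigma^{(2)}$) to deduce \eqref{eq:FF_mon2}. For \eqref{eq:FF_conv} you invoke the Dirichlet-principle identity $x^T\FF(\sigma)x=\sup_{v\in H}\bigl(2l_x(v)-b_\sigma(v,v)\bigr)$, which exhibits $\sigma\mapsto x^T\FF(\sigma)x$ as a pointwise supremum of affine functions and makes convexity immediate; the paper instead expands $0\leq b_\sigma(u_\sigma^{l_x}-u_{\sigma^{(0)}}^{l_x},\,u_\sigma^{l_x}-u_{\sigma^{(0)}}^{l_x})$ and regroups terms using $b_\sigma(u_\sigma^{l_x},u_{\sigma^{(0)}}^{l_x})=b_{\sigma^{(0)}}(u_{\sigma^{(0)}}^{l_x},u_{\sigma^{(0)}}^{l_x})$. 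Your variational route is more conceptual and transfers verbatim to any setting where $b_\sigma$ depends affinely on the parameters; the paper's algebraic expansion is more elementary (no optimization, only bilinearity and the variational equations) and, as a bonus, identifies the defect in \eqref{eq:FF_conv} explicitly as the $b_\sigma$-energy of the solution difference. Your Hessian alternative is also valid and is the second-order counterpart of the paper's first-order expansion.
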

\begin{proof}
Corollary \ref{cor:inf_dim_single_meas} shows that each component of $\FF$ is infinitely often differentiable so that (a) is proven.

For the rest of the proof let $\sigma\in \R^n_+$, $g\in \R^m$, and set $l:=\sum_{j=1}^m g_j l_j$.
By corollary~\ref{cor:inf_dim_single_meas}, 
\[
l_k(u_\sigma^{l_j})=b_\sigma(u_\sigma^{l_j},u_\sigma^{l_k})=b_\sigma(u_\sigma^{l_k},u_\sigma^{l_j})=l_j(u_\sigma^{l_k}),
\]
so that $\FF(\sigma)$ is a symmetric matrix. Moreover, 
\[
g^T \FF(\sigma) g=\sum_{j,k=1}^m g_j l_k(u_\sigma^{l_j}) g_k = \sum_{j,k=1}^m g_j g_k b_\sigma(u_\sigma^{l_j},u_\sigma^{l_k})=b_\sigma(u_\sigma^l,u_\sigma^l)\geq 0,
\]
so that $\FF(c)\succeq 0$. If $g\neq 0$ and $l_1,\ldots,l_m\in H'$ are linearly independent then $l\neq 0$, which implies 
$u_\sigma^l\neq 0$ and thus $g^T \FF(\sigma) g>0$. Hence, (b) is proven. 

To prove (c) and (d), we start by using again corollary~\ref{cor:inf_dim_single_meas} and obtain
\[
g^T (\FF'(\sigma)\tau) g=-\sum_{i=1}^n \tau_i b_i(u_\sigma^{l},u_\sigma^{l}) \quad \text{ for all } \tau\in \R^n.
\]
Since the bilinear forms $b_i(\cdot,\cdot)$ are positive semi-definite, this implies \eqref{eq:FF_mon}.

To prove \eqref{eq:FF_conv}, let $\sigma^{(0)}\in \R^n_+$. For brevity we write $u_0^l:=u_{\sigma_0}^l$.
Using 
\[
b_\sigma(u_\sigma^l,u_0^l)=l(u_0^l)=b_{\sigma_0}(u_0^l,u_0^l),
\]
we obtain that
\begin{align*}
0&\leq b_\sigma(u_\sigma^l-u_0^l,u_\sigma^l-u_0^l)=b_\sigma(u_\sigma^l,u_\sigma^l)-2b_\sigma(u_c^l,u_0^l)+b_\sigma(u_0^l,u_0^l)\\
&=b_\sigma(u_\sigma^l,u_\sigma^l)-2b_{\sigma_0}(u_0^l,u_0^l)+b_\sigma(u_0^l,u_0^l)\\
&= g^T (\FF(\sigma)-\FF(\sigma^{(0)})) g +b_\sigma(u_0^l,u_0^l)-b_{\sigma_0}(u_0^l,u_0^l)\\
&= g^T (\FF(\sigma)-\FF(\sigma^{(0)}) g + \sum_{i=1}^n (\sigma_i-\sigma_i^{(0)}) b_i(u_0^l,u_0^l).
\end{align*}
This shows that
\[
g^T (\FF(\sigma)-\FF(\sigma^{(0)})) g\geq -\sum_{i=1}^n (\sigma_i-\sigma_i^{(0)}) b_i(u_0^l,u_0^l)=g^T \FF'(\sigma^{(0)})(\sigma-\sigma^{(0)})g,
\]
so that \eqref{eq:FF_conv} holds. Together with \eqref{eq:FF_mon} this also implies \eqref{eq:FF_mon2}.

\eqref{eq:FF_conv2} follows from \eqref{eq:FF_conv} by the following standard argument. Let 
$\sigma,\sigma^{(0)}\in \R^n_+$, $t\in [0,1]$, and set 
\[
\sigma^{(t)}:=t \sigma+ (1-t) \sigma^{(0)}\in \R^n_+.
\]
Using \eqref{eq:FF_conv} on $\FF(\sigma)-\FF(\sigma^{(t)})$ and $\FF(\sigma^{(0)})-\FF(\sigma^{(t)})$, we 
then obtain that
\begin{align*}
\lefteqn{(1-t) \FF(\sigma^{(0)})+t \FF(\sigma)-\FF(\sigma^{(t)})}\\
&=(1-t) (\FF(\sigma^{(0)})-\FF(\sigma^{(t)})) +t ( \FF(\sigma)-\FF(\sigma^{(t)}))\\
&\succeq (1-t)\FF'(\sigma^{(t)})(\sigma^{(0)}-\sigma^{(t)}) + t\FF'(\sigma^{(t)})(\sigma-\sigma^{(t)})\\
&= \FF'(\sigma^{(t)})((1-t)\sigma^{(0)}+ t \sigma -\sigma^{(t)} )=0,
\end{align*}
which proves \eqref{eq:FF_conv2}.\hfill $\Box$
\end{proof}




\section{The FEM setting}\label{sect:FEM_setting}

\subsection{The FEM-approximated forward operator and its derivative}

\paragraph{The Finite Element Method.}
The Finite Element Method numerically approximates the solution of \eqref{eq:varform} by solving it in a finite-dimensional
subspace $V\subset H$, e.g.\ the subspace of continuous, piecewise linear functions on a fixed triangulation. 
Let $\Lambda_1,\ldots,\Lambda_N$ denote a basis of $V$, e.g.\ the so-called hat functions for linear finite elements. 
Then the finite-dimensional variational problem
\begin{equation}\label{eq:VarForm_FEM}
\tilde u_\sigma^l\in V \quad \text{ solves } \quad b_\sigma(\tilde u_\sigma^l,v)=l(v) \quad \text{ for all } v\in V
\end{equation}
is equivalent to 
\begin{equation}\label{eq:LGS_FEM}
\tilde u_\sigma^l=\sum_{j=1}^N \lambda_j \Lambda_j, \quad \text{ where } \quad B_\sigma\lambda=y^l,
\end{equation}
with $\lambda=(\lambda_j)_{j=1}^N\in \R^N$, and the so-called stiffness matrix and load vector 
\begin{align*}
B_\sigma &\in \R^{N\times N}, \hspace{-3em} && \text{ with $(j,k)$-th entry given by } b_\sigma(\Lambda_j,\Lambda_k),\\
y^l&\in \R^N, \hspace{-3em} && \text{ with $j$-th entry given by } l(\Lambda_j).
\end{align*}
It follows from the Lax-Milgram theorem that \eqref{eq:VarForm_FEM} is  uniquely solvable and that $B$ is a symmetric, 
positive definite (and thus invertible) matrix. Moreover, the C\'ea-Lemma yields that the FEM approximation $\tilde u_\sigma^l\in V$
is as good an approximation to the true solution $u_\sigma^l\in H$ as elements of the finite-dimensional space $V$ can be:
\begin{equation}\label{eq:Cea}
\norm{u_\sigma^l-\tilde u_\sigma^l}\leq \frac{C_\sigma}{\beta_\sigma}\inf_{v\in V}\norm{u_\sigma^l-v},
\end{equation}
where $C_\sigma:=C\max\{1,\sigma_1,\ldots,\sigma_n\}$, and $\beta_\sigma:=\beta \min\{1,\sigma_1,\ldots,\sigma_n\}$ are the
continuity and coercivity constants of $b_\sigma$, cf.\ \eqref{eq:b_sigma_coercive}.

\paragraph{Pixel stiffness matrices.}
Finite element software packages include triangulation algorithms, assembling routines for the global stiffness matrix $B_\sigma$ and the load vector $y^l$, and 
efficient solvers for the linear system $B_\sigma\lambda=y^l$. 
For our setting where
\[
b_\sigma(u,v)=b_0(u,v)+\sum_{i=1}^n \sigma_i b_i(u,v),
\]
we will also require the \emph{pixel stiffness matrices} 
\[
B_i\in \R^{N\times N}, \quad \text{ with $(j,k)$-th entry given by } \quad b_i(\Lambda_j,\Lambda_k).
\]

The assembling of $B_\sigma$ is usually done by writing it as a weighted sum of element stiffness matrices.
In our setting, it is natural to assume that the pixel partition complies with the FEM triangulation, i.e., that each pixel is a union of triangulation elements.
Figure \ref{fig:compliant_mesh} shows a coarser and a finer FEM mesh for the diffusion example, both complying with the pixel partition and with the subdomains that are used for measurements and excitations.
Hence, during the assembly of the global stiffness matrix $B_\sigma$, the pixel stiffness matrices can usually be obtained without any additional computational cost
by the simple intermediate step of first summing up the element matrices for each pixel, and then summing up the pixel stiffness matrices to obtain $B_\sigma$.
Alternatively, the pixel stiffness matrix $B_i$ can be conveniently obtained from global stiffness matrices by the simple identities
\[
B_i=B_{\1+e_i}-B_{\1},\quad \text{ and } \quad B_0=B_{\1}-\sum_{i=1}^n B_i,
\]
where $B_{\1+e_i}$ and $B_{\1}$ denote the global stiffness matrix $B_\sigma$ for $\sigma=\1+e_i$ and $\sigma=\1$, respectively, and
$e_i\in \R^n$ is the $i$-th unit vector. Note that this does not require any knowledge of the triangulation details.

\begin{figure*}
\begin{tabular}{c c}
  \includegraphics[width=0.45\textwidth]{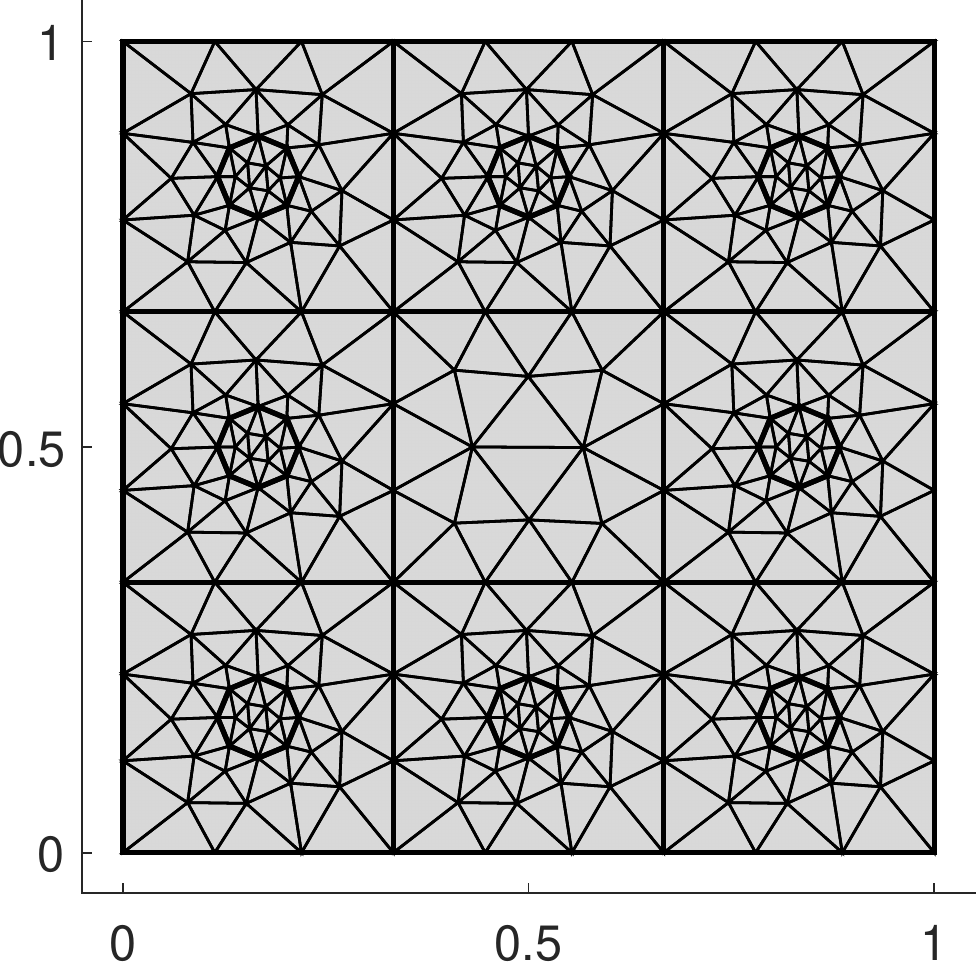} &
  \includegraphics[width=0.45\textwidth]{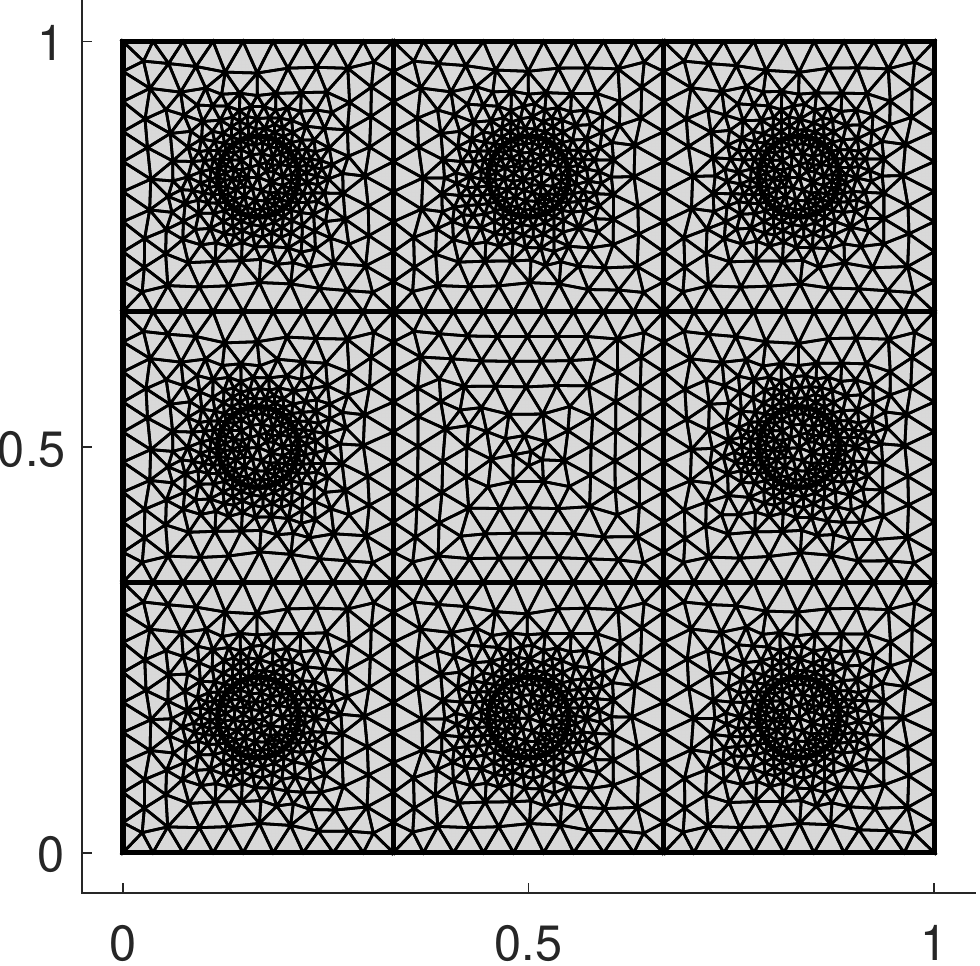} 
\end{tabular}
\caption{A coarser and a finer FEM-mesh for the diffusion example, both complying with the pixel partition and the measurement/excitation subdomains.}
\label{fig:compliant_mesh}       
\end{figure*}


\paragraph{The FEM-approximated forward operator.}
Given $l,r\in H'$, we approximate the true measurement $\FF_{l,r}(\sigma)=r(u_\sigma^l)$ by
\[
F_{l,r}(\sigma):=r(\tilde u_\sigma^l),
\]
where $\tilde u_\sigma^l\in V$ is the FEM-approximation to the true solution $u_\sigma^l\in H$, i.e., the solution of \eqref{eq:VarForm_FEM}.

\begin{lemma}\label{lemma:FEM_single_meas}
Let $l,r\in H'$. Then 
\[
F_{l,r}(\sigma)=b_\sigma(\tilde u_\sigma^l,\tilde u_\sigma^r) \quad \text{ for all $\sigma\in \R^n_+$.}
\]
Moreover, $F_{l,r}:\ \R^n_+\to \R$ is infinitely often differentiable and its first derivatives fulfill 
\[
\frac{\partial}{\partial \sigma_i} F_{l,r}(\sigma)=-b_i(\tilde u_\sigma^l,\tilde u_\sigma^r), \quad i=1,\ldots,n.
\]
\end{lemma}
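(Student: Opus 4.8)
The plan is to observe that the FEM problem \eqref{eq:VarForm_FEM} is nothing but the abstract variational problem \eqref{eq:varform} posed on the finite-dimensional subspace $V\subset H$ in place of $H$, so that Lemma~\ref{lemma:inf_dim_sol} and Corollary~\ref{cor:inf_dim_single_meas} can be applied verbatim with $H$ replaced by $V$. This reduces the entire statement to a transcription of the already-established infinite-dimensional results, requiring no new estimates.

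First I would check that all hypotheses of the variational setting transfer to $V$. Restricting $b_0,b_1,\ldots,b_n$ to $V\times V$ keeps them bounded, symmetric and positive semidefinite, and since $V\subseteq H$ the boundedness and coercivity estimate $C\norm{v}^2\geq b_\1(v,v)\geq \beta\norm{v}^2$ continues to hold for all $v\in V$ with the same constants $C,\beta>0$. Likewise, any $l,r\in H'$ restrict to continuous linear functionals on $V$, i.e.\ elements of $V'$. Hence the FEM-solution map $\sigma\mapsto \tilde u_\sigma^l$ is precisely the solution operator $\mathcal S$ of Lemma~\ref{lemma:inf_dim_sol} for the Hilbert space $V$, and $F_{l,r}(\sigma)=r(\tilde u_\sigma^l)$ is precisely the map $\FF_{l,r}$ of Corollary~\ref{cor:inf_dim_single_meas} for that space.

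Second, I would invoke Lemma~\ref{lemma:inf_dim_sol} with $(H,l,r)$ replaced by $(V,l|_V,r|_V)$. This immediately yields that $\sigma\mapsto\tilde u_\sigma^l$ is infinitely often Fr\'echet differentiable, that $r(\tilde u_\sigma^l)=b_\sigma(\tilde u_\sigma^l,\tilde u_\sigma^r)$, and that the directional derivative satisfies $r(\mathcal S'(\sigma)\tau)=-\sum_{i=1}^n\tau_i b_i(\tilde u_\sigma^l,\tilde u_\sigma^r)$. Applying Corollary~\ref{cor:inf_dim_single_meas} in the same fashion then delivers all three assertions of the lemma at once: the identity $F_{l,r}(\sigma)=b_\sigma(\tilde u_\sigma^l,\tilde u_\sigma^r)$, the infinite differentiability of $F_{l,r}$, and the partial-derivative formula $\frac{\partial}{\partial\sigma_i}F_{l,r}(\sigma)=-b_i(\tilde u_\sigma^l,\tilde u_\sigma^r)$, the latter obtained by choosing $\tau=e_i$.

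The only point demanding any care, and thus the mild ``obstacle'', is the verification that coercivity survives the passage to the subspace; but this is automatic, since the coercivity inequality is assumed for all $v\in H$ and $V\subseteq H$, so it holds in particular on $V$ with unchanged constants. Everything else is a direct inheritance of the results of the preceding section, so I expect the proof to be essentially a one-line reduction.
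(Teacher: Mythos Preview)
Your proposal is correct and matches the paper's own proof exactly: the paper simply states that the result follows by applying Corollary~\ref{cor:inf_dim_single_meas} to the Hilbert space $V$. Your additional verification that the bilinear forms and functionals inherit the required properties on the subspace is a welcome elaboration of what the paper leaves implicit.
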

\begin{proof}
This follows by applying corollary \ref{cor:inf_dim_single_meas} to the Hilbert space $V$.\hfill $\Box$
\end{proof}

From lemma~\ref{lemma:FEM_single_meas}, we obtain a simple FEM-based implementation of the forward operator and its derivative.
\begin{corollary}\label{cor:FEM_single_meas}
With 
\begin{alignat*}{2}
\tilde u_\sigma^l&=\sum_{j=1}^N \lambda^{l}_j \Lambda_j, \qquad & \lambda^l&=(\lambda_j^l)_{j=1}^N\in \R^N,\\
\tilde u_\sigma^r&=\sum_{j=1}^N \lambda^{r}_j \Lambda_j, \qquad & \lambda^r&=(\lambda_j^r)_{j=1}^N\in \R^N,
\end{alignat*}
we have that
\[
F_{l,r}(\sigma)=(\lambda^l)^T B_\sigma \lambda^r=(\lambda^l)^T y^r, \quad \text{ and } \quad \frac{\partial}{\partial \sigma_i} F_{l,r}(\sigma)=- (\lambda^l)^T B_i \lambda^r.
\]
\end{corollary}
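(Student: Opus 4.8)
The plan is to start from the two characterizations already established in Lemma~\ref{lemma:FEM_single_meas}, namely that $F_{l,r}(\sigma)=b_\sigma(\tilde u_\sigma^l,\tilde u_\sigma^r)$ and $\frac{\partial}{\partial\sigma_i}F_{l,r}(\sigma)=-b_i(\tilde u_\sigma^l,\tilde u_\sigma^r)$, and to simply rewrite each bilinear form in terms of its matrix representation with respect to the basis $\Lambda_1,\ldots,\Lambda_N$ of $V$. The whole argument is a bookkeeping exercise in bilinearity together with one appeal to the FEM linear system.

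First I would expand both FEM solutions in the basis, writing $\tilde u_\sigma^l=\sum_{j=1}^N\lambda_j^l\Lambda_j$ and $\tilde u_\sigma^r=\sum_{k=1}^N\lambda_k^r\Lambda_k$. Exploiting the bilinearity of $b_\sigma$ and recalling that, by definition, the $(j,k)$-entry of the stiffness matrix $B_\sigma$ is $b_\sigma(\Lambda_j,\Lambda_k)$, I obtain
\[
b_\sigma(\tilde u_\sigma^l,\tilde u_\sigma^r)
=\sum_{j,k=1}^N \lambda_j^l\,b_\sigma(\Lambda_j,\Lambda_k)\,\lambda_k^r
=(\lambda^l)^T B_\sigma\,\lambda^r,
\]
which, combined with Lemma~\ref{lemma:FEM_single_meas}, gives the first expression $F_{l,r}(\sigma)=(\lambda^l)^T B_\sigma\lambda^r$.

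For the second identity $F_{l,r}(\sigma)=(\lambda^l)^T y^r$, I would invoke the equivalence \eqref{eq:LGS_FEM}: since $\tilde u_\sigma^r\in V$ solves \eqref{eq:VarForm_FEM} with right-hand side $r$, its coefficient vector $\lambda^r$ satisfies the linear system $B_\sigma\lambda^r=y^r$, where $y^r\in\R^N$ has $j$-th entry $r(\Lambda_j)$. Substituting this yields $(\lambda^l)^T B_\sigma\lambda^r=(\lambda^l)^T y^r$. The derivative formula is entirely analogous: expanding $b_i$ via bilinearity and recalling that the $(j,k)$-entry of the pixel stiffness matrix $B_i$ is $b_i(\Lambda_j,\Lambda_k)$ gives $b_i(\tilde u_\sigma^l,\tilde u_\sigma^r)=(\lambda^l)^T B_i\lambda^r$, so that $\frac{\partial}{\partial\sigma_i}F_{l,r}(\sigma)=-(\lambda^l)^T B_i\lambda^r$ by Lemma~\ref{lemma:FEM_single_meas}.

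The computation is completely routine and presents no genuine obstacle. The only step that deserves a moment's attention is the passage from the stiffness-matrix form $(\lambda^l)^T B_\sigma\lambda^r$ to the load-vector form $(\lambda^l)^T y^r$, which is not a purely algebraic identity but rests on the fact that $\lambda^r$ is the \emph{solution} of the FEM system for the functional $r$, i.e.\ $B_\sigma\lambda^r=y^r$. Everything else follows directly from the bilinearity of $b_\sigma$ and $b_i$ and the definitions of the respective matrices.
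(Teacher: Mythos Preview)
Your proof is correct and follows exactly the approach the paper intends: the paper's own proof simply states that the corollary follows from Lemma~\ref{lemma:FEM_single_meas}, and your argument spells out precisely this derivation via bilinearity, the definitions of $B_\sigma$, $B_i$, $y^r$, and the FEM system $B_\sigma\lambda^r=y^r$.
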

\begin{proof}
This follows from lemma~\ref{lemma:FEM_single_meas}.\hfill $\Box$
\end{proof}

We summarize the consequences of corollary~\ref{cor:FEM_single_meas} in algorithm \ref{algo:FEM}. 
Using a FEM package that is capable of solving the considered PDE, and that allows access to the stiffness matrix and the load vector, one can simply implement 
the FEM-approximated forward operator and all its first derivatives by a few lines of extra code. This calculation merely requires solving two linear systems with the stiffness matrix
(which is equivalent to two PDE solutions). 

\begin{algorithm}[H]
\caption{FEM-approximation of $F_{l,r}(\sigma)$ and $\frac{\partial}{\partial \sigma_i} F_{l,r}(\sigma)$, $i=1,\ldots,n$}
\label{algo:FEM}
\begin{algorithmic}
\STATE{\textbf{given} $l,r\in H'$, $\sigma\in \R^n_+$}
\STATE{$\cdot$ use FEM package to calculate load vectors $y^l$ and $y^r$}
\STATE{$\cdot$ use FEM package to calculate stiffness matrices $B_{\1}$, and $B_{\1+e_i}$ for all $i=1,\ldots,n$}
\STATE{$\cdot$ set $B_i:=B_{\1+e_i}-B_{\1}$ for $i=1,\ldots,n$, and $B_\sigma:=B_\1+\sum_{i=1}^n (\sigma_i-1) B_i$}
\STATE{$\cdot$ solve $B_\sigma \lambda^l=y^l$ and $B_\sigma \lambda^r=y^r$ for $\lambda^l$ and $\lambda^r$}
\RETURN{$F_{l,r}(\sigma):=(\lambda^l)^T y^r$ and $\frac{\partial}{\partial \sigma_i}  F_{l,r}(\sigma):=-  (\lambda^l)^T B_i \lambda^r$, $i=1,\ldots,n$}
\end{algorithmic}
\end{algorithm}

\paragraph{Convergence of the FEM-approximated forward operator.}

The following lemma shows that the FEM-approximated operator and its first derivatives agree with their true-solution 
counterparts as good as the FEM solution agrees with the true solution. Hence, by the C\'ea-Lemma \eqref{eq:Cea}, 
$F_{l,r}(\sigma)$ and $\frac{\partial}{\partial \sigma_i} F_{l,r}(\sigma)$ will be as good an approximation to 
$\FF_{l,r}(\sigma)$ and $\frac{\partial}{\partial \sigma_i} \FF_{l,r}(\sigma)$
as the true solutions can be approximated by elements of the finite-dimensional space $V$.

\begin{lemma}\label{FEM_forward_error}
For all $l,r\in H'$ and $\sigma\in \R^n_+$, we have that:
\begin{align}\label{eq:FEM_forward_error1}
\FF_{l,r}(\sigma)-F_{l,r}(\sigma)&=b_\sigma(u_\sigma^l-\tilde u_\sigma^l,u_\sigma^r-\tilde u_\sigma^r),\\
\label{eq:FEM_forward_error2}
\frac{\partial}{\partial \sigma_i} \FF_{l,r}(\sigma)-\frac{\partial}{\partial \sigma_i} F_{l,r}(\sigma)&=b_i(\tilde u_\sigma^l,\tilde u_\sigma^r-u_\sigma^r)+b_i(\tilde u_\sigma^l-u_\sigma^l,u_\sigma^r).
\end{align}
Hence, by the C\'ea-Lemma \eqref{eq:Cea},
\begin{align*}
0\leq \FF_{l,r}(\sigma)-F_{l,r}(\sigma)& \leq C_\sigma \norm{u_\sigma^l-\tilde u_\sigma^l}\norm{u_\sigma^r-\tilde u_\sigma^r}\\
& \leq \frac{C_\sigma^3}{\beta_\sigma^2}\inf_{v\in V}\norm{u_\sigma^l-v} \inf_{v\in V}\norm{u_\sigma^r-v},
\end{align*}
and
\begin{align*}
\lefteqn{\left|\frac{\partial}{\partial \sigma_i} \FF_{l,r}(\sigma)-\frac{\partial}{\partial \sigma_i} F_{l,r}(\sigma)\right|}\\
& \leq C_i \norm{\tilde u_\sigma^l} \norm{\tilde u_\sigma^r-u_\sigma^r} + C_i\norm{u_\sigma^r} \norm{\tilde u_\sigma^l-u_\sigma^l}\\
& \leq  \frac{C_i C_\sigma}{\beta_\sigma}\left( 
\norm{\tilde u_\sigma^l} \inf_{v\in V}\norm{u_\sigma^r-v}+ \norm{ u_\sigma^r} \inf_{v\in V}\norm{u_\sigma^l-v}\right),
\end{align*}
where $C_i>0$ is the continuity constant of $b_i(\cdot,\cdot)$.
\end{lemma}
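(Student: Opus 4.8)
The plan is to reduce everything to the quadratic representations already in hand. Corollary~\ref{cor:inf_dim_single_meas} gives $\FF_{l,r}(\sigma)=b_\sigma(u_\sigma^l,u_\sigma^r)$ together with $\frac{\partial}{\partial\sigma_i}\FF_{l,r}(\sigma)=-b_i(u_\sigma^l,u_\sigma^r)$, and Lemma~\ref{lemma:FEM_single_meas} gives the very same formulas with the FEM solutions $\tilde u_\sigma^l,\tilde u_\sigma^r$ replacing the true ones. Hence both error expressions are differences of a single bilinear form ($b_\sigma$ for the value, $b_i$ for the derivative) evaluated at true versus discrete solutions, and the lemma reduces to bilinear bookkeeping plus the standard a priori machinery. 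The one structural ingredient I expect to need is \emph{Galerkin orthogonality}: since $u_\sigma^l$ and $\tilde u_\sigma^l$ both satisfy $b_\sigma(\cdot,v)=l(v)$ for every $v\in V$, the error $e^l:=u_\sigma^l-\tilde u_\sigma^l$ is $b_\sigma$-orthogonal to $V$, and likewise $e^r:=u_\sigma^r-\tilde u_\sigma^r$.

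For the identity \eqref{eq:FEM_forward_error1} I would expand $b_\sigma(e^l,e^r)$ by bilinearity into $b_\sigma(u_\sigma^l,u_\sigma^r)-b_\sigma(u_\sigma^l,\tilde u_\sigma^r)-b_\sigma(\tilde u_\sigma^l,u_\sigma^r)+b_\sigma(\tilde u_\sigma^l,\tilde u_\sigma^r)$ and then collapse the two cross terms: as $\tilde u_\sigma^r\in V$, orthogonality of $e^l$ yields $b_\sigma(u_\sigma^l,\tilde u_\sigma^r)=b_\sigma(\tilde u_\sigma^l,\tilde u_\sigma^r)$, and symmetry of $b_\sigma$ together with orthogonality of $e^r$ yields $b_\sigma(\tilde u_\sigma^l,u_\sigma^r)=b_\sigma(\tilde u_\sigma^l,\tilde u_\sigma^r)$. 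What survives is precisely $b_\sigma(u_\sigma^l,u_\sigma^r)-b_\sigma(\tilde u_\sigma^l,\tilde u_\sigma^r)=\FF_{l,r}(\sigma)-F_{l,r}(\sigma)$.

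The derivative identity \eqref{eq:FEM_forward_error2} is purely algebraic and uses no orthogonality: starting from $b_i(\tilde u_\sigma^l,\tilde u_\sigma^r)-b_i(u_\sigma^l,u_\sigma^r)$ I would insert and subtract the mixed term $b_i(\tilde u_\sigma^l,u_\sigma^r)$ and regroup by bilinearity into $b_i(\tilde u_\sigma^l,\tilde u_\sigma^r-u_\sigma^r)+b_i(\tilde u_\sigma^l-u_\sigma^l,u_\sigma^r)$, the stated right-hand side.

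For the quantitative estimates the upper bounds are immediate. Applying continuity of $b_\sigma$ (constant $C_\sigma$) to \eqref{eq:FEM_forward_error1}, and continuity of $b_i$ (constant $C_i$) to the two summands of \eqref{eq:FEM_forward_error2}, and then controlling each factor $\norm{u_\sigma^\bullet-\tilde u_\sigma^\bullet}$ by $\frac{C_\sigma}{\beta_\sigma}\inf_{v\in V}\norm{u_\sigma^\bullet-v}$ via the C\'ea estimate \eqref{eq:Cea}, yields the claimed products of best-approximation errors. The step I expect to be the only delicate one is the \emph{lower} bound $0\le\FF_{l,r}(\sigma)-F_{l,r}(\sigma)$: by \eqref{eq:FEM_forward_error1} this equals $b_\sigma(e^l,e^r)$, which is manifestly nonnegative in the symmetric case $l=r$, where it is $b_\sigma(e^l,e^l)\ge\beta_\sigma\norm{e^l}^2\ge 0$ by coercivity of $b_\sigma$; for genuinely distinct $l\ne r$ the quantity $b_\sigma(e^l,e^r)$ is a mixed $b_\sigma$-inner product for which only the two-sided magnitude bound $|b_\sigma(e^l,e^r)|\le C_\sigma\norm{e^l}\norm{e^r}$ is guaranteed. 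I would therefore read the nonnegativity in the symmetric measurement setting of Section~\ref{subsect:true_symmetric}, where it reflects the semidefiniteness structure of the diagonal entries, rather than as an assertion about arbitrary off-diagonal pairs.
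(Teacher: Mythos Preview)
Your argument matches the paper's proof: the paper obtains \eqref{eq:FEM_forward_error1} via the same Galerkin identities $b_\sigma(\tilde u_\sigma^l,\tilde u_\sigma^r)=b_\sigma(u_\sigma^l,\tilde u_\sigma^r)=b_\sigma(\tilde u_\sigma^l,u_\sigma^r)$ (phrased through the variational equations rather than the word ``orthogonality''), gets \eqref{eq:FEM_forward_error2} by the identical add-and-subtract of $b_i(\tilde u_\sigma^l,u_\sigma^r)$, and leaves the quantitative estimates to continuity plus C\'ea.

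Your caution about the lower bound $0\le\FF_{l,r}(\sigma)-F_{l,r}(\sigma)$ is warranted and in fact sharper than the paper. The paper's own proof does not address this inequality, and for general $l\ne r$ it can fail: take $r=-l$, so that $e^r=-e^l$ and $b_\sigma(e^l,e^r)=-b_\sigma(e^l,e^l)\le 0$. The only place the inequality is actually used is Lemma~\ref{lemma:FEM_symmetric}(e), i.e.\ $\FF(\sigma)\succeq F(\sigma)$ in the Loewner order, which amounts to $g^T(\FF(\sigma)-F(\sigma))g\ge 0$ and follows from \eqref{eq:FEM_forward_error1} with $l=r=\sum_j g_j l_j$ --- precisely the symmetric case you isolate. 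So your reading is the correct one.
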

\begin{proof}
Using 
\begin{align*}
b_\sigma(\tilde u_\sigma^l,\tilde u_\sigma^r)=l(\tilde u_\sigma^r)=b_\sigma(u_\sigma^l,\tilde u_\sigma^r),\quad \text{ and } \quad
b_\sigma(\tilde u_\sigma^l,\tilde u_\sigma^r)=r(\tilde u_\sigma^l)=b_\sigma(\tilde u_\sigma^l,u_\sigma^r),
\end{align*}
we obtain \eqref{eq:FEM_forward_error1} from
\begin{align*}
\FF_{l,r}(\sigma)-F_{l,r}(\sigma)&=b_\sigma(u_\sigma^l,u_\sigma^r)-b_\sigma(\tilde u_\sigma^l,\tilde u_\sigma^r)=b_\sigma(u_\sigma^l,u_\sigma^r-\tilde u_\sigma^r)\\
&=b_\sigma(u_\sigma^l,u_\sigma^r-\tilde u_\sigma^r)-b_\sigma(\tilde u_\sigma^l,u_\sigma^r-\tilde u_\sigma^r)\\
&=b_\sigma(u_\sigma^l-\tilde u_\sigma^l,u_\sigma^r-\tilde u_\sigma^r).
\end{align*}
Also, 
\begin{align*}
\frac{\partial}{\partial \sigma_i} \FF_{l,r}(\sigma)-\frac{\partial}{\partial \sigma_i} F_{l,r}(\sigma)
&= b_i(\tilde u_\sigma^l,\tilde u_\sigma^r)-b_i(u_\sigma^l,u_\sigma^r)\\
&= b_i(\tilde u_\sigma^l,\tilde u_\sigma^r-u_\sigma^r)+b_i(\tilde u_\sigma^l-u_\sigma^l,u_\sigma^r),
\end{align*}
which shows \eqref{eq:FEM_forward_error2}. \hfill $\Box$
\end{proof}

\subsection{Convexity and monotonicity for symmetric measurements}\label{subsect:FEM_symmetric}

As in subsection \ref{subsect:true_symmetric} we now consider the symmetric measurement case, where $l$ and $r$ are taken from the same subset of $H'$ (and all combinations are used). 
Given a set of $m\in \N$ excitations/measurements $\{l_1,\ldots,l_m\}\subset H'$, we combine the measurements into a matrix-valued map $F:\ \R^n_+\to \R^{m\times m}$
\[
F(\sigma)=(F_{j,k}(\sigma))_{j,k=1,\ldots,m}\in \R^{m\times m}, \quad F_{j,k}(\sigma)=F_{l_j,l_k}(\sigma)=l_k(u_\sigma^{l_j}).
\]

The entries of $F(\sigma)$ and its first derivatives $\frac{\partial}{\partial \sigma_i} F(\sigma)$, $i=1,\ldots,n$
can be calculated as in algorithm~\ref{algo:FEM}. Let us stress that this approach is particularly efficient in this symmetric case as it requires only $m$ linear system solutions with the stiffness matrix (i.e., the equivalent of $m$ PDE solutions) for calculating all $m^2$ entries of $F(\sigma)\in \R^{m\times m}$ and all $n m^2$ entries of the $n$ matrices $\frac{\partial}{\partial \sigma_i} F(\sigma)\in \R^{m\times m}$.

As in subsection \ref{subsect:true_symmetric}, the FEM-approximated forward operator 
is monotonically non-increasing and convex in the sense of the elementwise order ''$\geq$'' on $\R^n$, and 
the Loewner order  ''$\succeq$'' on the set of symmetric $m\times m$-matrices. 

\begin{lemma}\label{lemma:FEM_symmetric}
$F:\ \R^n_+\to \R^{m\times m}$ has the following properties:
\begin{enumerate}[(a)]
\item $F$ is infinitely often differentiable. 
\item For all $\sigma\in \R^n_+$, $F(\sigma)\in \sS_m$ and $F(\sigma) \succeq 0$. $F(\sigma)$ is positive definite if $l_1,\ldots,l_m\in H'$ are linearly independent.
\item $F$ is monotonically non-increasing, i.e.
\begin{align}
\label{eq:F_mon} F'(\sigma)\tau \preceq 0 \quad \text{ for all } \quad \sigma\in \R^n_+,\ 0\leq \tau\in \R^n,
\end{align}
\item $F$ is convex, i.e.
\begin{align}
\label{eq:F_conv} F(\sigma)-F(\sigma^{(0)}) \succeq F'(\sigma^{(0)})(\sigma-\sigma^{(0)}) \quad \text{ for all } \quad \sigma,\sigma^{(0)}\in \R^n_+.
\end{align}
\item $\FF(\sigma)\succeq F(\sigma)$.
\end{enumerate}
\end{lemma}
\begin{proof}
(a)--(d) follow from applying lemma \ref{lemma:infdim_convexity} on the Hilbert space $V$. (e) was proven in lemma~\ref{FEM_forward_error}. \hfill $\Box$
\end{proof}

\section{Numerical examples and inverse problem challenges}\label{sect:numerics}

In this section, we will show some numerical results for the stationary diffusion example from section~\ref{subsect:diffusion} 
and demonstrate some major challenges that arise in solving the inverse coefficient problem of recovering $\hat \sigma\in \R^n$ from
$\FF(\hat \sigma)\in \R^m$, or from a noisy version $Y^\delta\approx F(\hat \sigma)$. The source codes for the following examples (and also for generating figure \ref{fig:diffusion_example} and \ref{fig:diffusion_example2}) 
are given in the appendix for the reader's reference. 

\subsection{Non-uniqueness}
Even for $m\geq n$, and a noise-free measurement $\hat Y=F(\hat \sigma)\in \R^m$, it is not clear whether the measurements uniquely determine the unknown $\hat \sigma\in \R^n$.
To demonstrate this on a simple one-dimensional example, let us consider the stationary diffusion example
with $3\times 3$ pixels and circular excitation/measurement subdomains in each boundary pixel as in figure \ref{fig:diffusion_example}. We apply a source term in $D_1$ in the lower left pixel, and measure the resulting total concentration in $D_8$ in the top right pixel, so that $l=\chi_1\in H^{-1}(\Omega)$ and $r=\chi_8\in H^{-1}(\Omega)$, where we write $\chi_j:=\chi_{D_j}$ for the ease of notation. We choose $ \sigma=1$ in all pixels except $\mathcal P_i$, and on $\mathcal P_i$ we vary the diffusivity in steps of $0.01$ up to $3$.
Figure \ref{fig:non_uniqueness} shows $F_{l,r}(\sigma)$ for all $i=1,\ldots,9$, in the same order as the pixels, 
e.g., the lower left image shows $F_{l,r}(\sigma)$ for $\sigma=(\sigma_1,1,\ldots,1)$ for varying $\sigma_1$.

\begin{figure*}
\begin{tabular}{c c c}
  \includegraphics[width=0.28\textwidth]{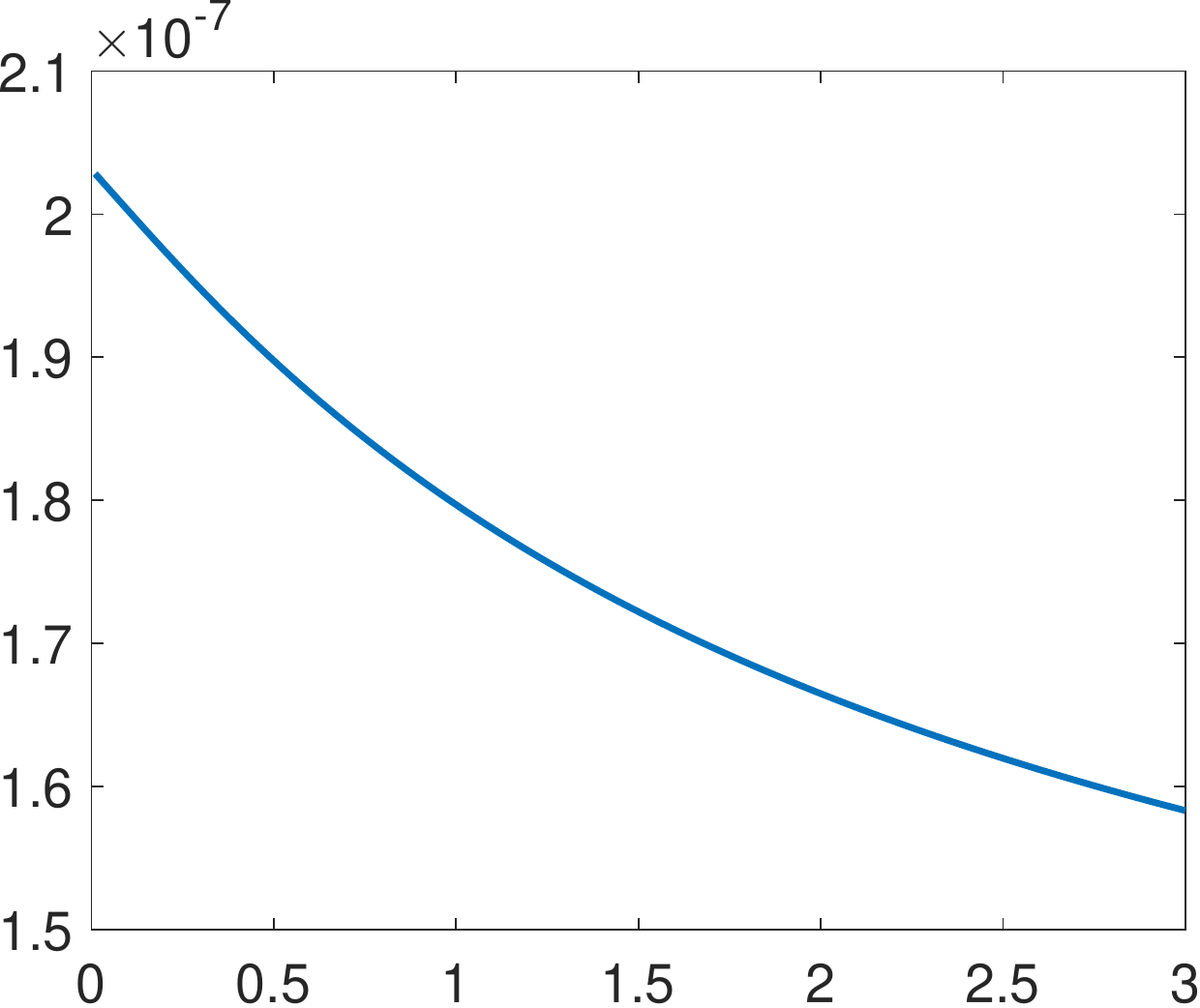} &
  \includegraphics[width=0.28\textwidth]{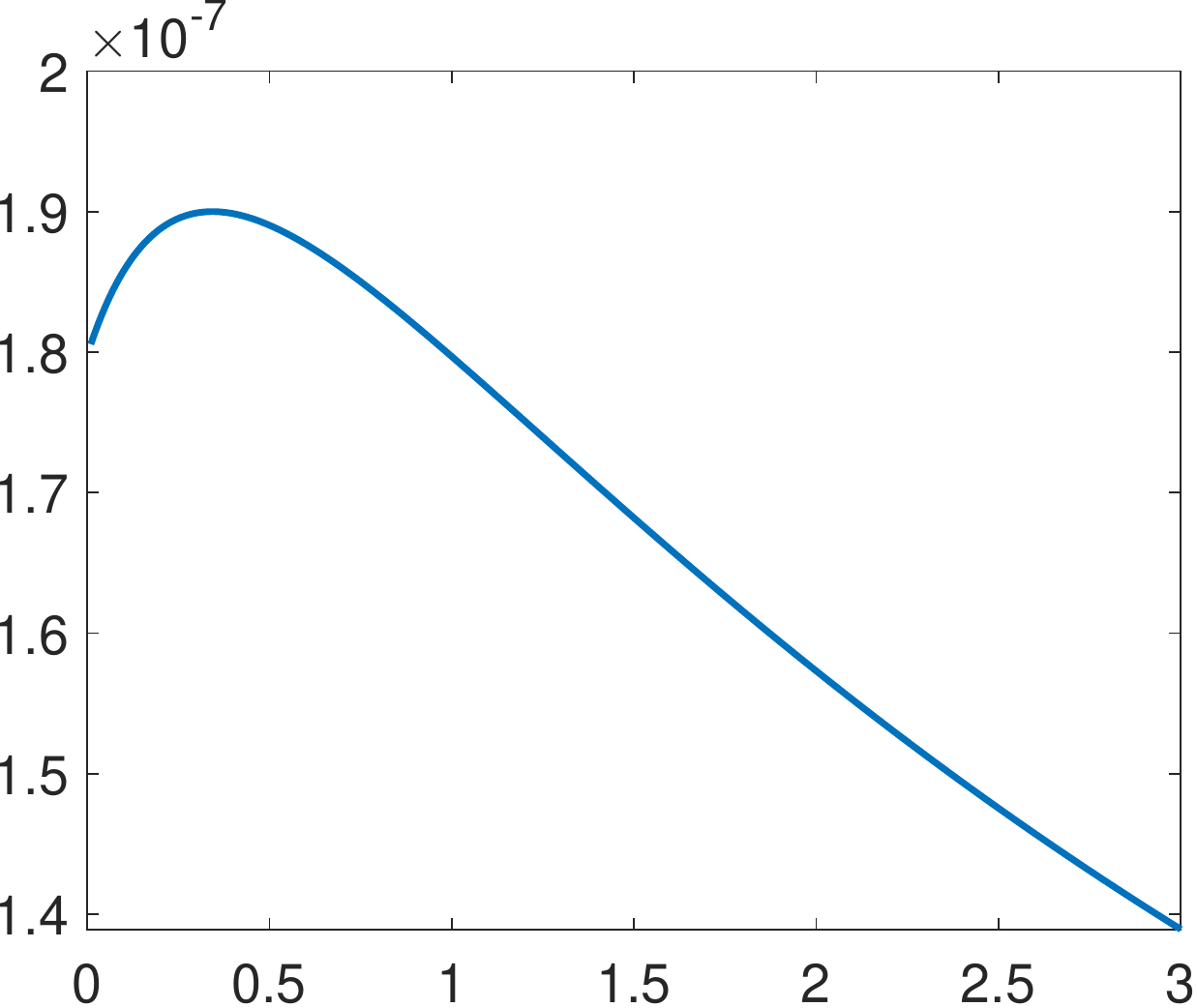} &
  \includegraphics[width=0.28\textwidth]{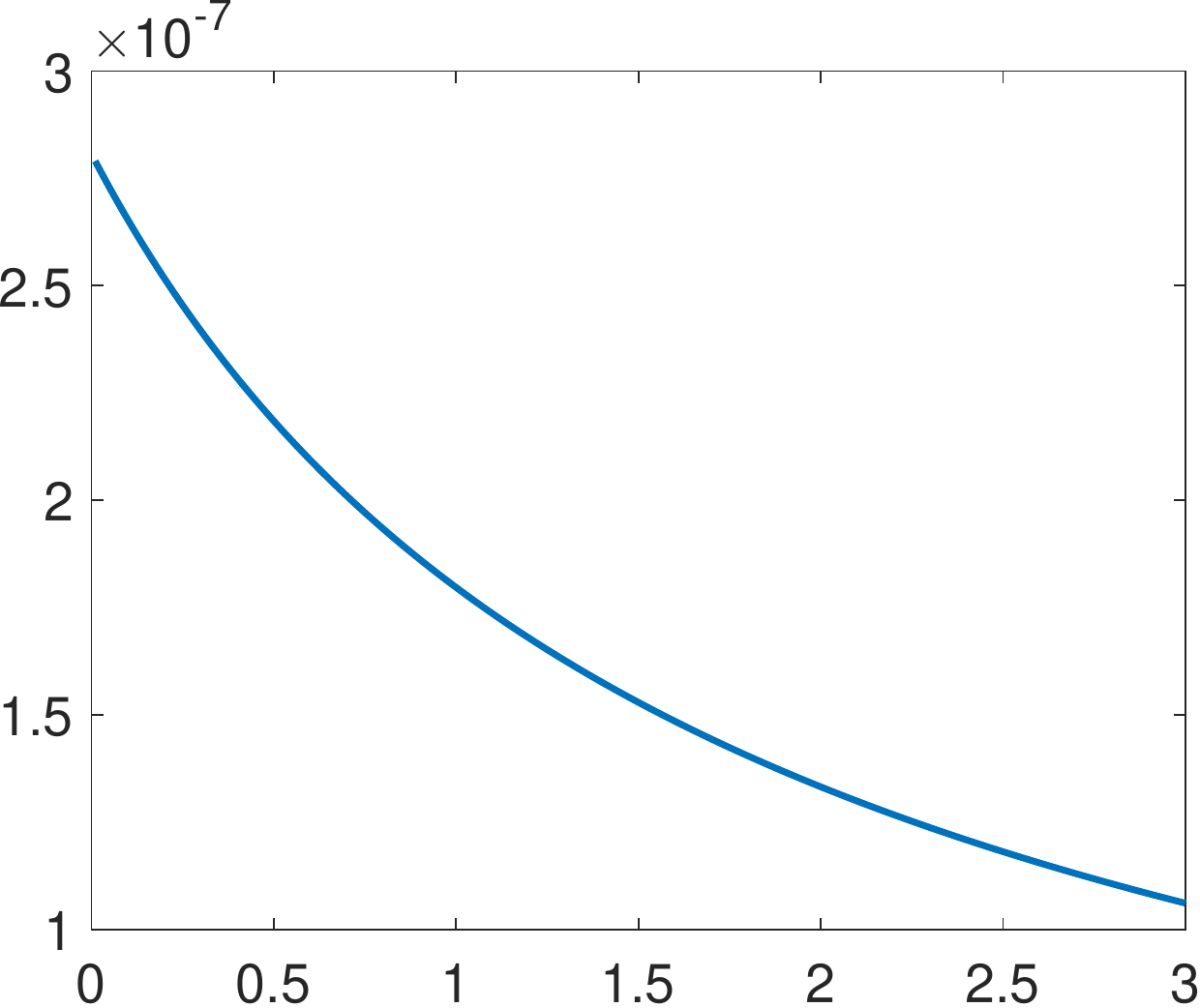}\\
  \includegraphics[width=0.28\textwidth]{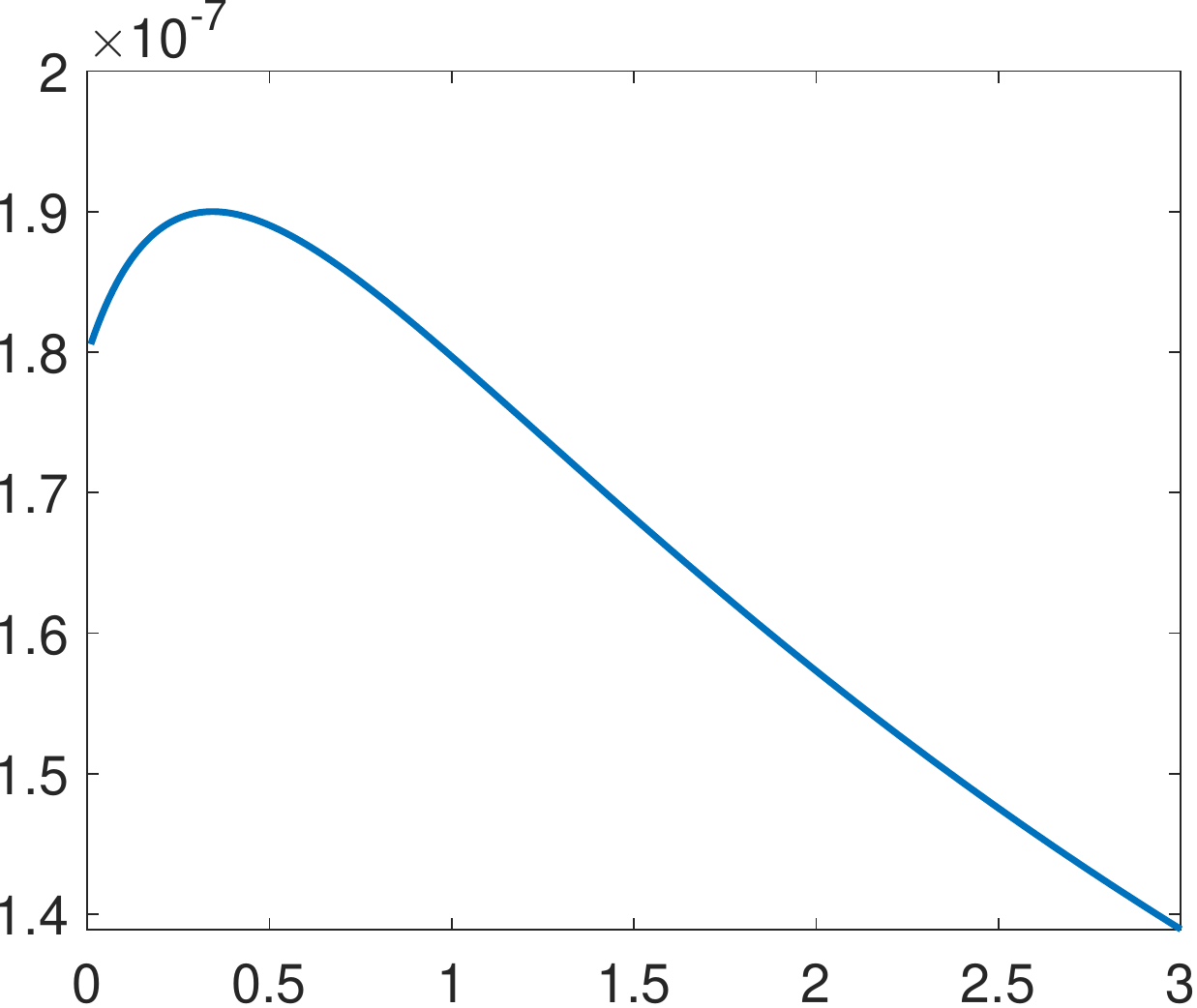} &
  \includegraphics[width=0.28\textwidth]{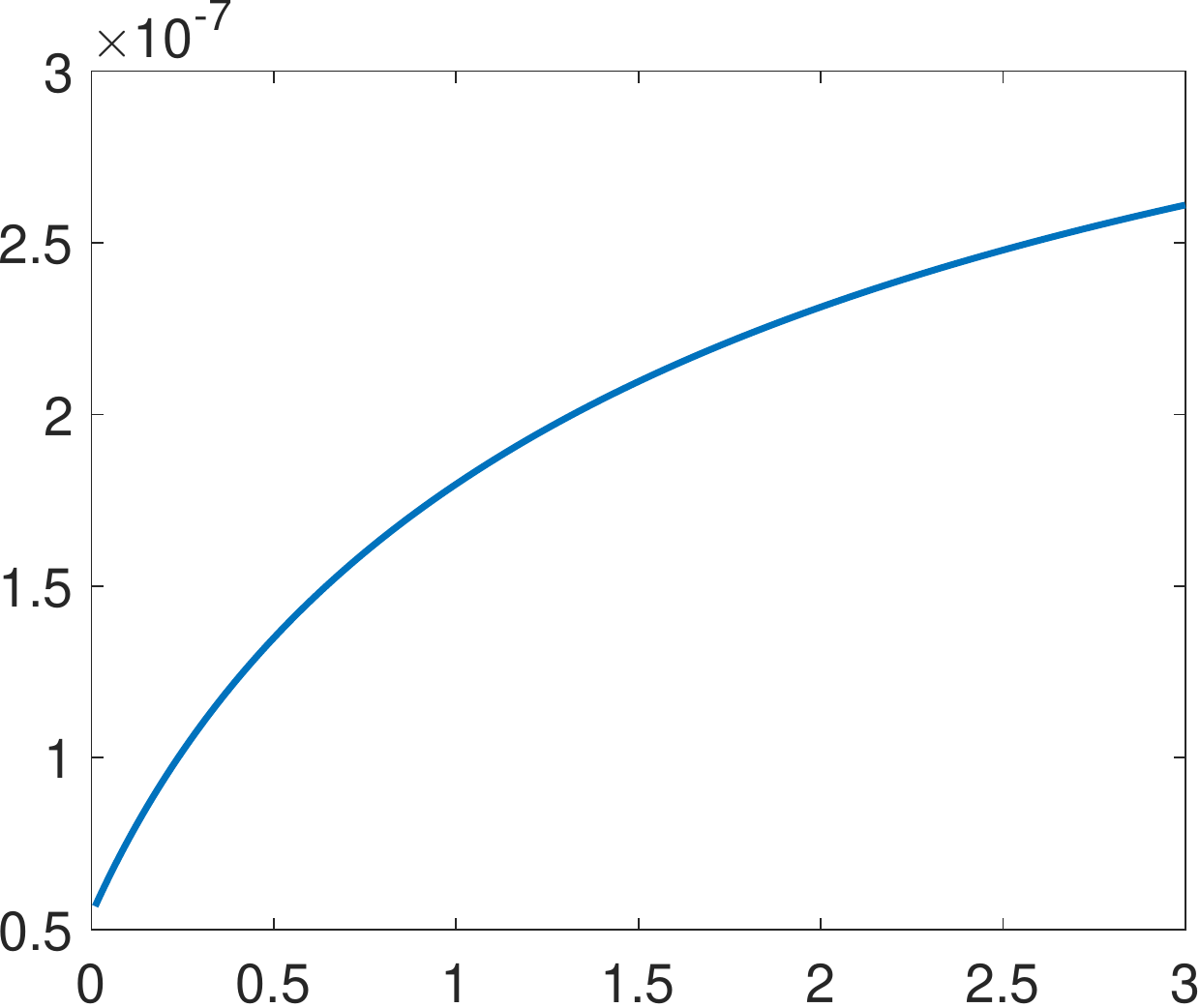} &
  \includegraphics[width=0.28\textwidth]{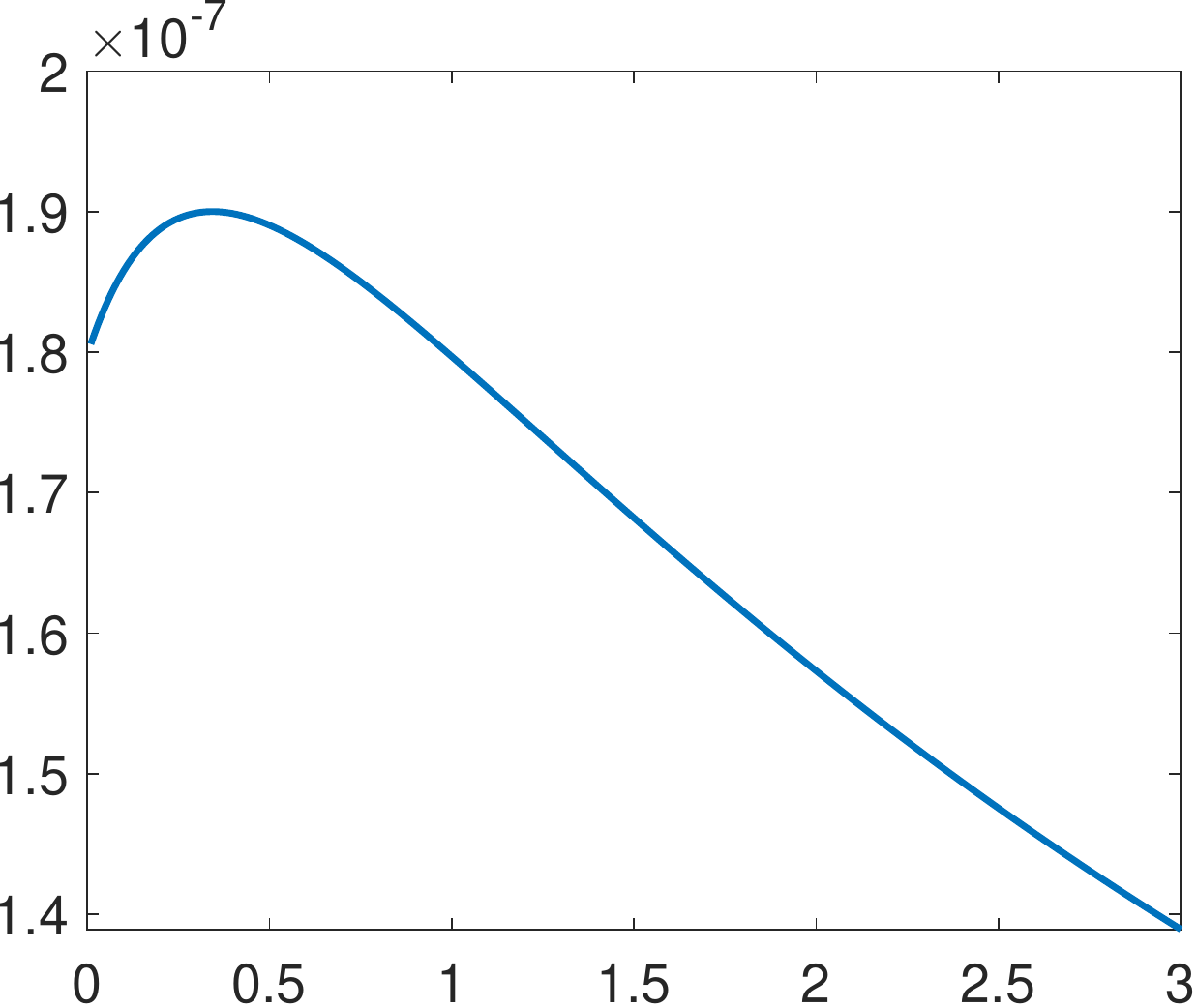}\\
  \includegraphics[width=0.28\textwidth]{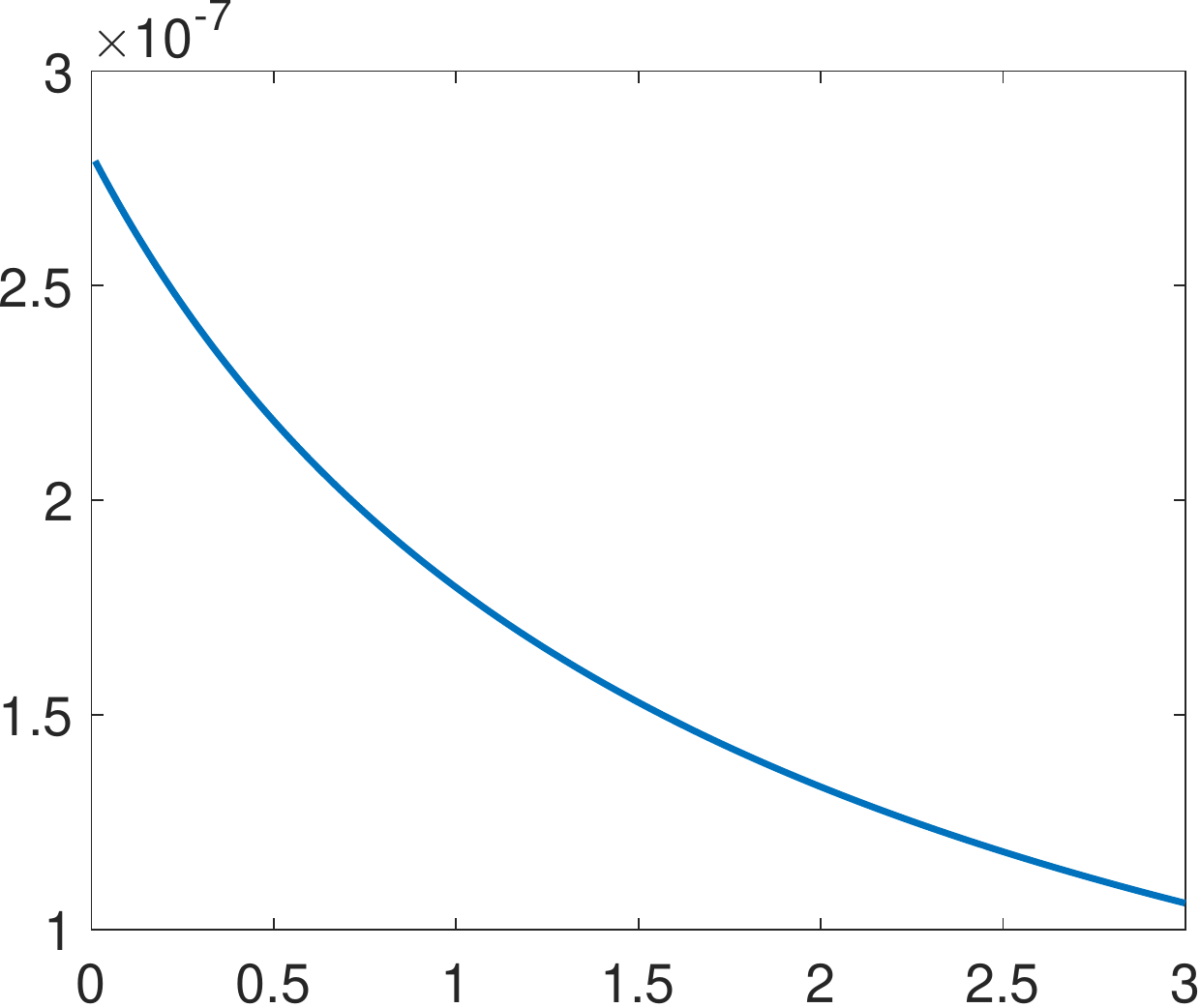} &
  \includegraphics[width=0.28\textwidth]{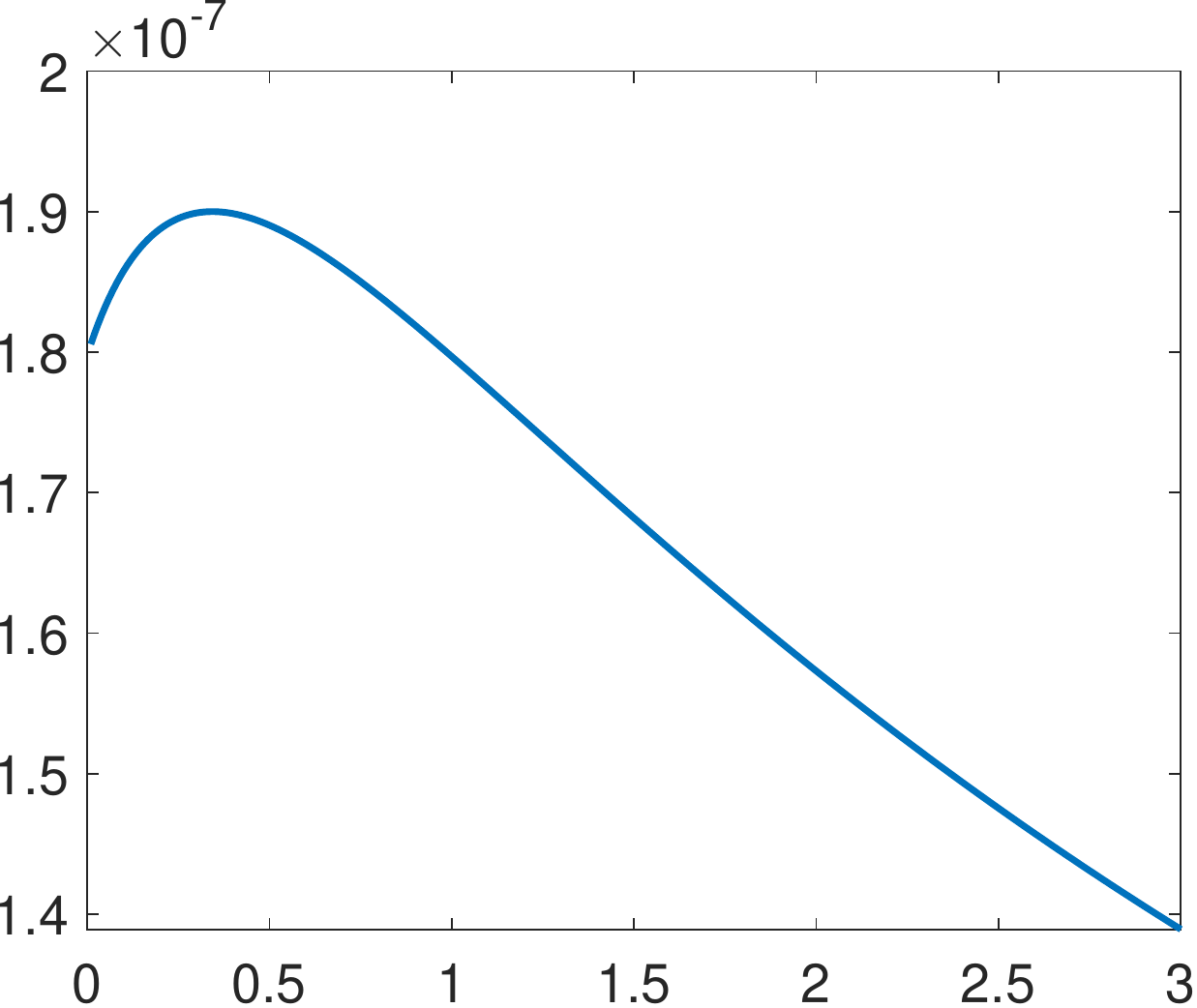} &
  \includegraphics[width=0.28\textwidth]{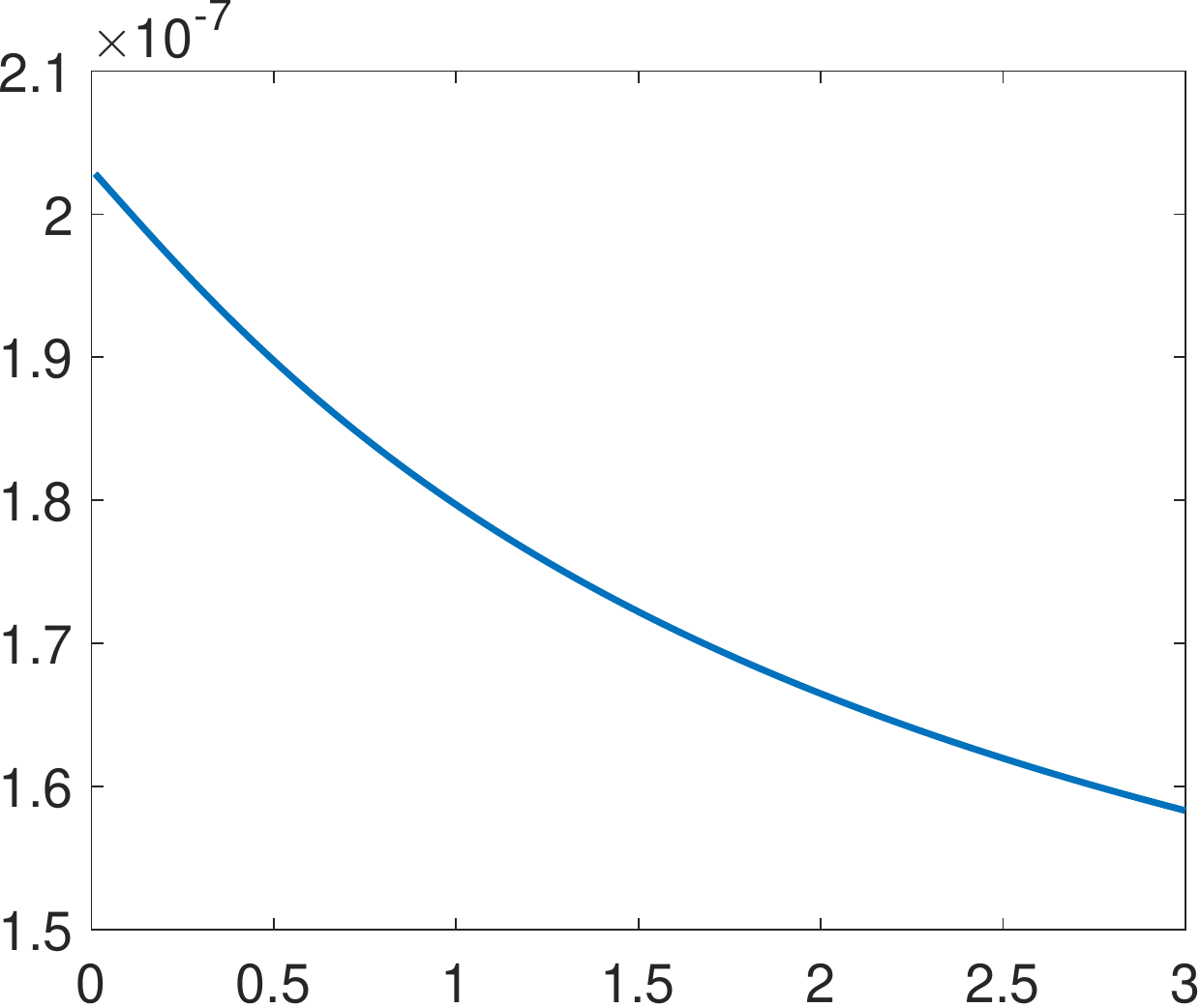}
\end{tabular}
\caption{Single measurement in the top right pixel for a source term in the lower left pixel as a
function of changing the diffusivity in each of the $3\times 3$ pixels.}
\label{fig:non_uniqueness}
\end{figure*}

Intuitively speaking, one can see that rising the diffusivity in the middle pixel increases the measurement since 
particles can easier diffuse through the middle pixel on their way from the lower left to the top right. Rising the diffusivity in the corner pixels
decreases the measurement since particles can easier diffuse to the boundary that is set to zero by the homogeneous Dirichlet condition.
In the middle top, left, bottom, and right pixel, rising the diffusivity first increases the measurement since particles
can easier find their way from the lower left to the top right. But at some point, this effect is reverted since it also drives particles
to the boundary. 

This demonstrates that changing a coefficient can have an increasing or decreasing effect on the measurements, and the effect does not have to be monotonous
for all parameter values. It also indicates that an exact one-dimensional measurement $\hat Y=F_{l,r}(\hat \sigma)$ 
might uniquely determine one parameter in $\hat \sigma$ in some cases (here: the diffusivity in the middle and corner pixels), 
but non-uniqueness might occur in other cases (here: in the middle top, left, bottom, and right pixel).

Let us also stress the following point. A single non-symmetric measurement $F_{l,r}(\sigma)$ with $l\neq r$ might depend non-monotonously on $\sigma$ as demonstrated in 
figure~\ref{fig:non_uniqueness}. But, by lemma~\ref{lemma:FEM_symmetric}, for all $l,r\in H'$, the symmetric measurements $F_{l,l}(\sigma)$, $F_{r,r}(\sigma)$, and also the 
matrix-valued measurement
\[
\begin{pmatrix} F_{l,l}(\sigma) & F_{l,r}(\sigma)\\ F_{r,l}(\sigma) & F_{r,r}(\sigma) \end{pmatrix}\in \R^{2\times 2}
\]
are monotonously non-increasing and convex functions of $\sigma$. Note that the monotonicity and convexity properties of the
matrix-valued measurement hold with respect to the Loewner order even though the individual non-diagonal matrix elements might not have these properties.

\subsection{Non-linearity and local minima}

The inverse problem of recovering $\hat \sigma\in \R^n$ from $F(\hat\sigma)\in \R^m$ could be considered as a non-linear root finding problem and
(for $n=m$) approached with Newton's method which is only known to converge locally. However, in practice one usually 
takes redundant measurements (i.e., $m>n$), and, due to measurement or modelling errors, one cannot expect exact data fit.
Hence, a common approach to reconstruct $\hat \sigma\in \R^n$ from $Y^\delta\approx F(\hat \sigma)\in \R^m$ is to minimize a residuum functional,
e.g.,
\[
R(\sigma):=\norm{F(\sigma)-Y^\delta}^2\to \text{min!}
\]
or a sum of a residuum functional together with a regularization term. 

For our simple $3\times 3$-pixel example, the left image in figure \ref{fig:residuals}  shows a contour plot of $R(\sigma)$ (in a normalized logarithmic scale) for 
a two-dimensional measurement function $F(\sigma):=\begin{pmatrix} F_{\chi_1,\chi_7}(\sigma) & F_{\chi_1,\chi_8}(\sigma)\end{pmatrix}\in \R^2$, exact data 
$Y^\delta=F(\hat \sigma)$,
\[
\hat \sigma=\begin{pmatrix} 1 & 1 & 1 & 0.5 & 1 & 0.5 & 1 &1 &1 \end{pmatrix}^T
\quad \text{ and } \quad
\sigma=\begin{pmatrix} 1 & 1 & 1 & \sigma_4 & 1 & \sigma_6 & 1 &1 &1 \end{pmatrix}^T.
\]
$\sigma_4$ and $\sigma_6$ are varied in steps of $0.002$ up to $0.6$. 
Again, for this choice of unknowns and measurements, we numerically observe non-uniqueness. The results indicate that there is a second point $\tilde \sigma\neq \hat \sigma$ with
$F(\tilde \sigma)=F(\hat \sigma)$. 

\begin{figure*}
\begin{tabular}{c c}
  \includegraphics[height=0.4\textwidth]{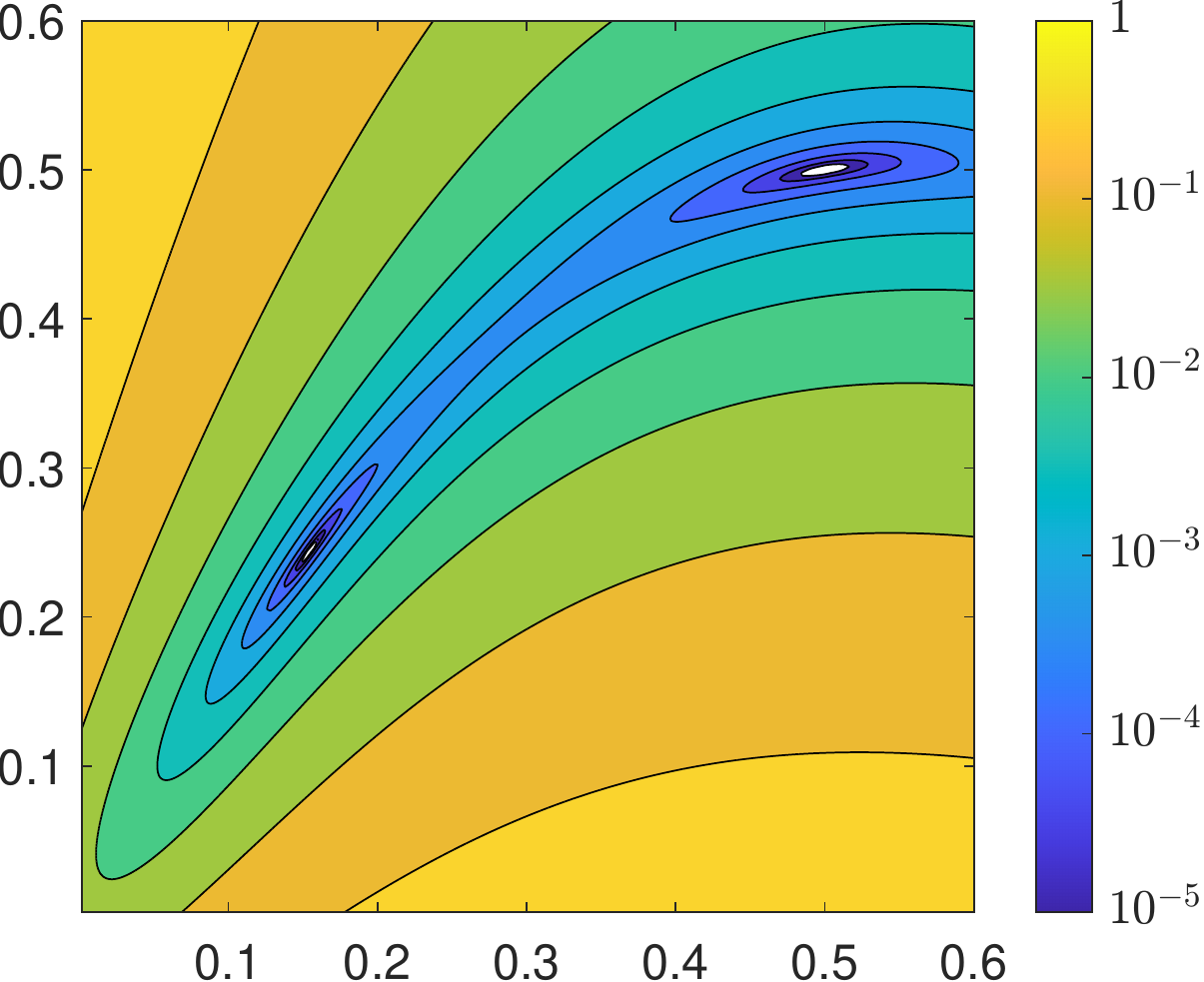} &
  \includegraphics[height=0.4\textwidth]{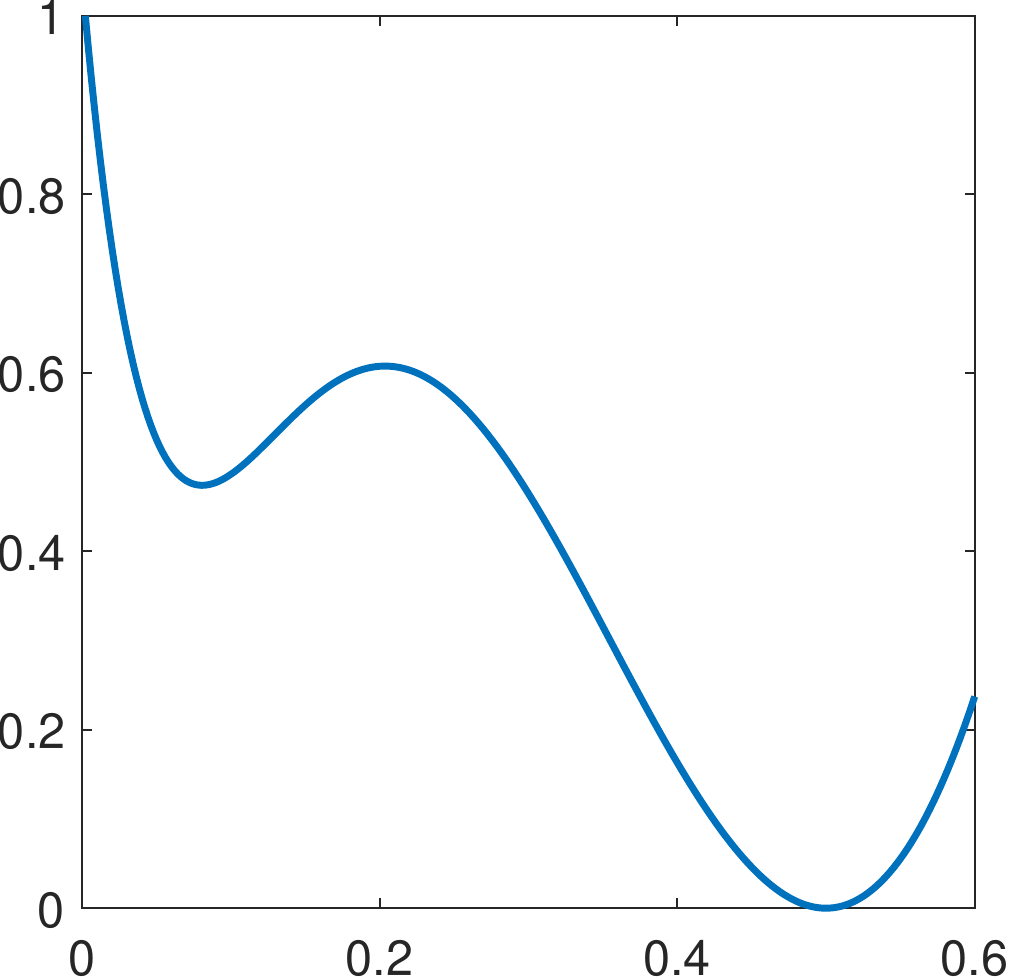}
\end{tabular}
\caption{Residuum functional $R(\sigma)$ as a function of two parameters (left image), and as a function of one parameter (right image).}
\label{fig:residuals}
\end{figure*}

The right image in figure \ref{fig:residuals} shows a plot of $R(\sigma)$ (in a normalized non-logarithmic scale) for varying $\sigma_4$ while keeping $\sigma_6:=\sigma_4$, i.e. the diagonal of the left image in figure \ref{fig:residuals}. It indicates that in this case, one parameter in $\hat \sigma$ can be uniquely reconstructed from the two-dimensional measurement $F(\hat \sigma)$. But it also shows that the residuum functional $R(\sigma)$ possesses a local minimum in a wrong point. Since the curse of dimensionality makes it practically infeasible to numerically find the global minimizer of a non-linear functional in more than a few unknowns, this demonstrates that optimization-based approaches also suffer from only local convergence.

\subsection{Stability, error estimates and ill-posedness}

Even if $F(\hat \sigma)$ uniquely determines $\hat \sigma$, and if this non-linear problem can be solved without running into a local minimum, 
the problem might be ill-posed in the sense that $\sigma$ does not depend stably on $F(\sigma)$. In that case, small errors in the measurements 
might lead to large errors in the reconstruction. The error amplification is often quantified by searching for a Lipschitz stability constant $L>0$ with
\[
\norm{\sigma_1-\sigma_2}\leq L \norm{F(\sigma_1)-F(\sigma_2)} \quad \text{ for all } \sigma_1,\sigma_2\in \R^n.
\]
It should be stressed though, that such a stability estimate does not immediately yield an error estimate for noisy measurements $Y^\delta\approx F(\sigma)$, since 
$Y^\delta$ might not lie in the range of $F$. 

To estimate the (relative) error amplification, we calculate the condition number of $F'(\1)$ for our $3\times 3$ example, where $F:\ \R^9\to \R^{64}$
now depends on all $9$ pixel values, and the components of $F$ are given by $F_{l,r}$ with $l$ and $r$ running through all $8\times 8$-combinations of 
$\chi_{1},\ldots,\chi_{8}$, i.e., we use all combinations of circular subdomain for applying source terms and for measuring the solution. Note that,
lemma~\ref{lemma:FEM_symmetric} implies that roughly half of the measurements are redundant by symmetry, and that unlike in sections \ref{subsect:true_symmetric} and \ref{subsect:FEM_symmetric}, we simply write the measurements as a long vector in order to have $F'(\1)\in \R^{9,64}$ as a matrix.

We repeat the calculation of the condition number of $F'(\1)$ for analogous settings with $n_x\times n_x$-pixels, 
and $(4n_x-4)\times (4n_x-4)$ measurements on circular subdomains in the boundary pixels, cf.\ the left image in figure~\ref{fig:stability}
for the geometry of the $15\times 15$-pixel case. The right image in figure~\ref{fig:stability}
shows the condition number as a function of the total number of pixels $n_x^2$ for $n_x\in \{2,3,\ldots,15\}$.

\begin{figure*}
\begin{tabular}{c c}
  \includegraphics[width=0.45\textwidth]{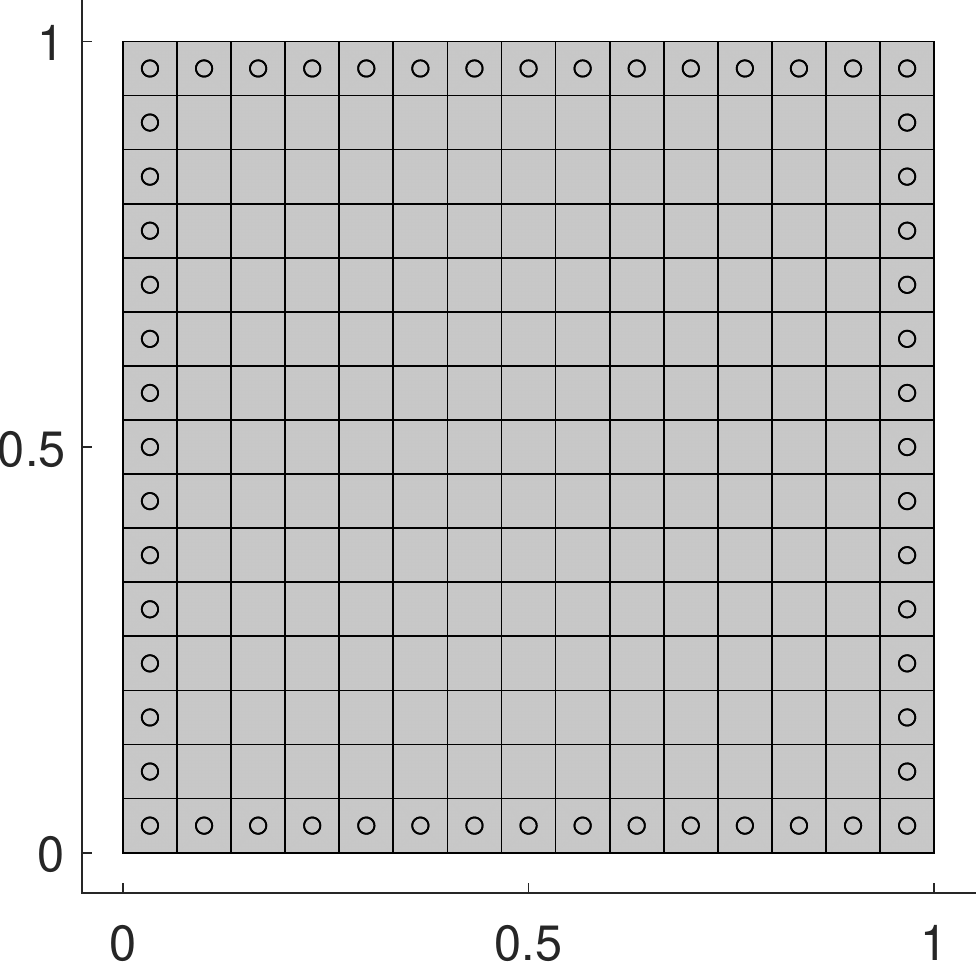} &
  \includegraphics[width=0.45\textwidth]{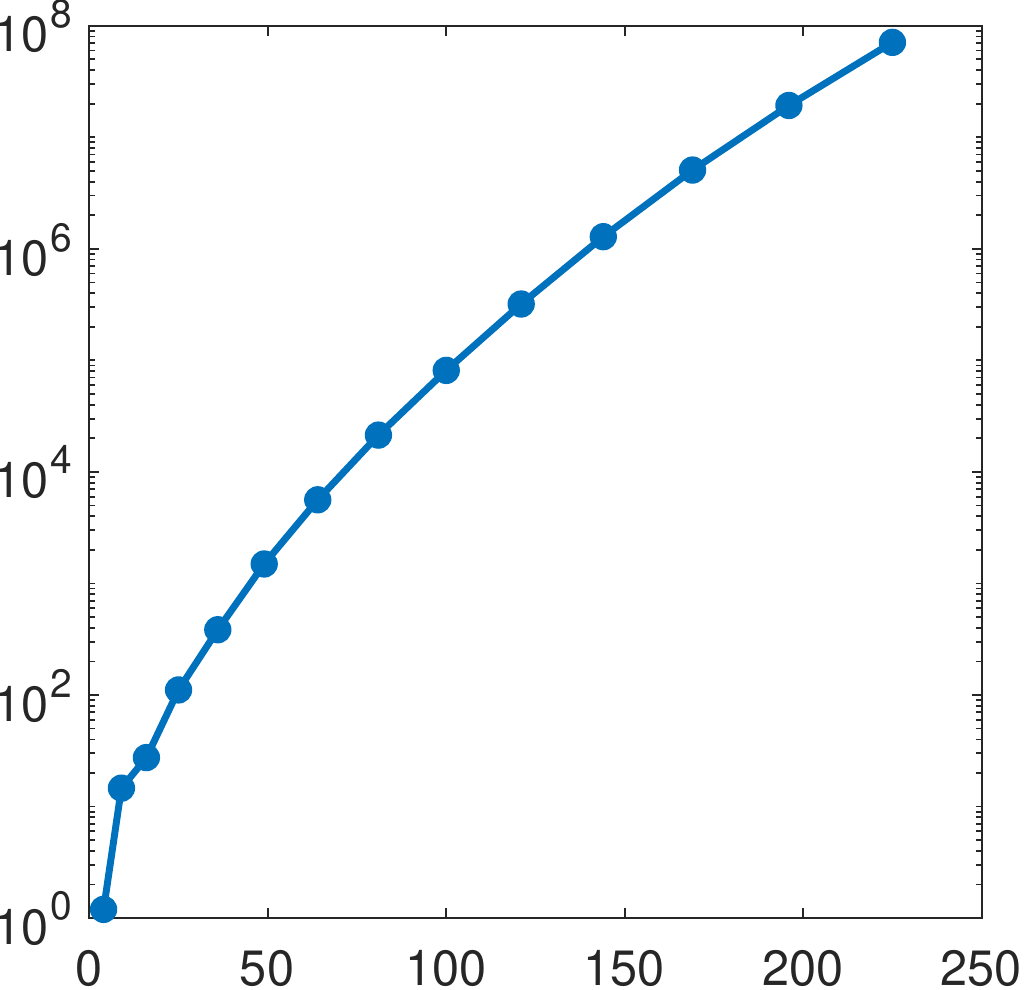}
\end{tabular}
\caption{Condition number of $F'(\1)$ as a function of the total number of pixels (right image) for pixel partitions ranging from $2\times 2$ to $15\times 15$ 
as sketched in left image.}
\label{fig:stability}
\end{figure*}

Our results indicate that the instability of the considered inverse problem grows roughly exponentially with the number of unknowns which
is in par with theoretical results on the similar elliptic inverse coefficient problem of EIT \cite{rondi2006remark}.

\subsection{Further reading}

There is vast literature on theoretical and numerical inverse coefficient problems, their practical applications, and the regularization of ill-posed problems. Let us single out just a very few results as starting points for further reading that are closely connected to the challenges addressed in this section, and the author's own research. Arguably the most prominent inverse elliptic coefficient problem is the so-called Calder\'on problem \cite{calderon1980inverse,calderon2006inverse} with applications in EIT, cf.\ \cite{adler2015electrical,arridge2011methods} for surveys on EIT and the related field of diffuse optical tomography. For theoretical uniqueness proofs in the infinite-dimensional setting of (intuitively speaking) infinitely many pixels and measurements we refer to \cite{uhlmann2009electrical,harrach2009uniqueness,kenig2014recent}. A survey on solving parameter identification problems for PDEs with a focus on sparsity regularization can be found in \cite{jin2012sparsity}. The interplay between instability, regularization and FEM discretization is studied in \cite{kaltenbacher2011adaptive}. A result result on convexification approaches to obtain globally convergent reconstruction algorithms can be found in \cite{klibanov2019convexification}. Learning-based approaches for inverse coefficient problems and parametrized PDEs are studied in \cite{arridge2019solving,seo2019learning,bhattacharya2020model}.

Results on uniqueness and Lipschitz stability for finitely many unknowns from infinitely or finitely many measurements have
been obtained in, e.g., \cite{alessandrini2005lipschitz,alberti2019calderon,harrach2019uniqueness}. 
Let us stress that, for most problems, it is still an open question, how many (and which) measurements are required to uniquely determine
an unknown PDE coefficient with a given resolution, how to explicitly quantify the error amplification, and how to obtain globally convergent reconstruction algorithms.
For a relatively simple, but fully non-linear inverse problem of determining a Robin coefficient in an elliptic PDE, these question 
were recently answered in \cite{harrach2021uniqueness} by exploiting the convexity and monotonicity structure of symmetric measurements from section \ref{subsect:true_symmetric} and \ref{subsect:FEM_symmetric}.



\bibliographystyle{abbrv}
\bibliography{literaturliste}

\newpage

\appendix
\section{Appendix: Source code for COMSOL with MATLAB LiveLink}\label{sect:appendix}

Listing 1--3 contain auxiliary functions to build and manipulate the FEM model. Figure~\ref{fig:diffusion_example}--\ref{fig:stability}
are created by listing 4--9.

\lstinputlisting[caption=Build\_model.m,language=MatLab]{numerics/Build_Model.m}

\bigskip

\lstinputlisting[caption=Set\_Variables.m,language=MatLab]{numerics/Set_Variables.m}\pagebreak

\lstinputlisting[caption=Build\_Stiffness\_and\_Load.m,language=MatLab]{numerics/Build_Stiffness_and_Load.m}\pagebreak

\lstinputlisting[caption=Plot\_Figure\_Setting.m,language=MatLab]{numerics/Plot_Figure_Setting.m}\pagebreak

\lstinputlisting[caption=Plot\_Figure\_Excitations.m,language=MatLab]{numerics/Plot_Figure_Excitations.m}

\bigskip

\lstinputlisting[caption=Plot\_Figure\_CompliantMesh.m,language=MatLab]{numerics/Plot_Figure_CompliantMesh.m}\pagebreak

\lstinputlisting[caption=Plot\_Figure\_Non\_uniqueness.m,language=MatLab]{numerics/Plot_Figure_Non_uniqueness.m}\pagebreak

\lstinputlisting[caption=Plot\_Figure\_Residuals.m,language=MatLab]{numerics/Plot_Figure_Residuals.m}\pagebreak

\lstinputlisting[caption=Plot\_Figure\_Stability.m,language=MatLab]{numerics/Plot_Figure_Stability.m}


%

\end{document}